\theoremstyle{plain}
\newtheorem{theorem}{Theorem}[section]
\newtheorem{lem}{Lemma}[section]
\numberwithin{equation}{section}
\newcommand\card{\#}
\newcommand\conv{\operatorname{conv}}
\def\R{\mathbb{R}}
\def\N{\mathbb{N}}
\def\T{\mathbb{T}}
\def\irA{\mathcal{A}}
\def\irB{\mathcal{B}}
\def\irI{\mathcal{I}}
\def\irM{\mathcal{M}}
\begin{document}

\title{Surjective Kuiper isometries}

\author{Gy\"orgy P\'al Geh\'er}
\address{MTA-SZTE Analysis and Stochastics Research Group \\
Bolyai Institute, University of Szeged \\
H-6720 Szeged, Aradi v\'ertan\'uk tere 1., Hungary}
\address{MTA-DE "Lend\"ulet" Functional Analysis Research Group, Institute of Mathematics\\
University of Debrecen\\
H-4010 Debrecen, P.O. Box 12, Hungary}
\email{gehergy@math.u-szeged.hu or gehergyuri@gmail.com}
\urladdr{http://www.math.u-szeged.hu/$\sim$gehergy/}

\keywords{Borel probability measures on $\mathbb{R}$, Kuiper distance, surjective isometries}
\subjclass[2010]{Primary: 47B49, 54E40, Secondary: 47B38, 60B10}

\begin{abstract}
Characterisations of surjective isometries with respect to the Kuiper distance on three classes of Borel probability measures of $\R$ (or equivalently, probability distribution functions) are presented here.
These classes are the set of continuous, absolute continuous and general measures.
\end{abstract}

\maketitle

\section{Intorduction and statements of the results}
The famous Banach--Stone theorem characterises surjective linear isometries between Banach spaces of continuous complex-valued functions on compact Hausdorff spaces equipped with the supremum norm.
Motivated by this theorem, G. Dolinar and L. Moln\'ar described surjective isometries of probability distribution functions with respect to the Kolmogorov--Smirnov distance in their joint paper \cite{KSisom1}.
It is important to note that the space of probability distribution functions is not a linear space, therefore there is no point in considering linearity of these maps.
Though, this space is a convex subset of a linear space, i.e. the space of all real-valued measures on $\R$, therefore invariance of convex combinations could be considered. 
But on the contrary, in \cite{KSisom1} the authors considered general surjective isometries, and the invariance of convex combinations under these transformations was not an assumption but a conclusion.

Later, L. Moln\'ar continued this project in three further publications \cite{KSisom2,Lisom,KSisom3}.
The present paper is a contribution to this line of research.
Namely, we will consider the so-called Kuiper distance and describe the structure of general surjective isometries on three classes of probability distribution functions.
However, unlike in the before mentioned papers, we will prefer the language of Borel probability measures instead of distribution functions.
We point out that as in the papers \cite{KSisom1,KSisom2,Lisom,KSisom3}, the invariance of convex combinations under these transformations will be a conclusion.

The space of all Borel probability measures will be denoted by $P(\R)$.
We call a $\mu\in P(\R)$ continuous if we have $\mu(\{x\}) = 0$ for every $x\in\R$, and absolute continuous if it is absolutely continuous with respect to the Gaussian measure (or equivalently with respect to the usual length measure $m$ on $\R$, though, we have to keep in mind that $m\notin P(\R)$).
The class of all continuous and absolutely continuous Borel probability measures on $\R$ will be denoted by $P_c(\R)$ and $P_{ac}(\R)$, respectively.
The symbol $\irB_\R$ will stand for the set of all Borel subsets of $\R$.

The distribution function of $\mu\in P(\R)$ is usually defined by $f_\mu(t) := \mu((-\infty,t])$ $(t\in\R)$.
It is well-known that $f_\mu$ is monotone increasing, continuous from the right, and satisfies $\lim_{t\to\infty} f_\mu(t) = 1$ and $\lim_{t\to -\infty} f_\mu(t) = 0$.
The Kolmogorov--Smirnov distance on $P(\R)$ is given by
$$
d_{KS}(\mu,\nu) = \sup_{t\in\R} |f_\mu(t)-f_\nu(t)| = \sup_{t\in\R} \big|\mu((-\infty,t])-\nu((-\infty,t])\big|,
$$
and the so-called total variation distance or statistical distance is 
$$
d_{TV}(\mu,\nu) = \sup \left\{ \big|\mu(B)-\nu(B)\big| \colon B\in\irB_\R \right\}.
$$

Let $\irI$ denote the set of all non-degenerate intervals of $\R$, i.e. 
$$
\irI = \{I\subseteq\R \colon \card I > 1 \text{ and } I \text{ is connected}\}.
$$
The set of all (possibly degenerate) intervals of $\R$ will be denoted by $\irI_0$, i.e. 
$$
\irI_0 = \irI \cup \big\{\{x\}\colon x\in\R\big\} = \{I\subseteq\R \colon I \text{ is connected and } I\neq\emptyset\}.
$$
The Kuiper metric is given by the following formula, where the second and third equations are easy to see:
\begin{equation}\label{Kuiper-def_eq}
\begin{gathered}
d_{Ku}(\mu,\nu) := \sup_{t\in\R} (f_\mu(t)-f_\nu(t)) + \sup_{t\in\R} (f_\nu(t)-f_\mu(t)) \\
= \sup\{|\mu(I)-\nu(I)|\colon I\in\irI\} = \sup\{|\mu(I)-\nu(I)|\colon I\in\irI,\, I\text{ is bounded}\}.
\end{gathered}
\end{equation}
This metric is a natural modified version of the above mentioned two distances. 
Obviously, we have $d_{Ku}(\mu,\nu) \leq 1$ $(\mu,\nu\in P(\R))$.
We will show in the next section that instead of supremum we can take maximum, if $I$ runs through $\irI_0$ instead of $\irI$.
This metric was defined by N.H. Kuiper in \cite{Kuiper}, and it seems that it is more useful in statistics than the Kolmogorov--Smirnov distance (\cite[pp. 39]{Da}).
It is also important because of the Kuiper density problem (\cite{DaKo}).

The goal of this paper is to present a characterisation of surjective isometries of $P_c(\R)$, $P_{ac}(\R)$ and $P(\R)$ with respect to the Kuiper distance.
We remark that this question was posed by Moln\'ar in a personal conversation.
Let $A\in\irB_\R$ and $g\colon A\to\R$ be an injective function which transforms Borel sets into Borel sets, i.e. $g(B) = g(B\cap A) \in \irB_\R$ $(B\in\irB_\R)$.
If $\mu\in P(\R)$, then by $\mu\circ g$ we mean the (not necessarily probability) Borel measure defined by 
$$
(\mu\circ g)(B) = \mu(g(B)) \quad (B\in\irB_\R).
$$
Clearly, in the special case when $g\colon A\to\R$ is surjective, we have $\mu\circ g \in P(\R)$.
We also point out that if $\mu\in P_c(\R)$ and $\card(\R\setminus g(A)) = 1$, then $\mu\circ g \in P_c(\R)$.

For every $x\in\R$ let us define the function 
$$
r_x\colon\R\setminus\{x\} \to \R, \quad r_x(t) = \frac{1}{t-x}; 
$$
and let $r_\infty\colon\R\to\R$ be the identity function.

Now, we formulate our main results on surjective Kuiper isometries.

\begin{theorem}\label{main-c_thm}
Let $\phi\colon P_c(\R) \to P_c(\R)$ be a surjective transformation which is an isometry with respect to $d_{Ku}$, i.e. we have
$$
d_{Ku}(\mu,\nu) = d_{Ku}(\phi(\mu),\phi(\nu)) \qquad (\mu,\nu\in P_c(\R)).
$$
Then there exists a homeomorphism $g\colon \R\to\R$ and an $x\in\R\cup\{\infty\}$ such that $\phi$ has the following form:
\begin{equation}\label{phiformc}
\phi(\mu) = \mu\circ (g \circ r_x) \qquad (\mu \in P_c(\R)).
\end{equation}
Moreover, transformations of the above form are all surjective isometries with respect to the Kuiper metric.
\end{theorem}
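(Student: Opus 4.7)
For the easier ``moreover'' direction, when $x\in\R$ the composition $h := g\circ r_x$ is a Borel bijection of $\R\setminus\{x\}$ onto $\R\setminus\{g(0)\}$ whose natural extension to the one-point compactification $\R\cup\{\infty\}$ is a homeomorphism sending $x\mapsto\infty\mapsto g(0)$. A bounded interval $I\in\irI$ is pushed by $h$ either onto a bounded interval of $\R$ (when $x\notin\overline{I}$), onto the complement of a bounded interval containing $g(0)$ (when $x$ lies in the interior of $I$), or onto a ray (when $x$ is an endpoint of $I$). Using $|\mu(E)-\nu(E)| = |\mu(E^c)-\nu(E^c)|$ for probabilities, and noting that rays contribute at most $d_{KS}\leq d_{Ku}$, the three cases together reproduce exactly the supremum in (\ref{Kuiper-def_eq}), giving $d_{Ku}(\mu\circ h,\nu\circ h) = d_{Ku}(\mu,\nu)$; the case $x=\infty$ is trivial since then $h=g$ is a homeomorphism of $\R$ sending bounded intervals to bounded intervals.

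For the main direction, my strategy is to extract a homeomorphism $h$ of $\R\cup\{\infty\}$ from $\phi$ such that $\phi(\mu) = \mu\circ h$, and then to recognise $h = g\circ r_x$ by setting $x := h^{-1}(\infty)$. The first step is convex-linearity: a direct calculation gives $d_{Ku}(\mu,\tfrac12(\mu+\nu)) = \tfrac12 d_{Ku}(\mu,\nu)$, so $\tfrac12(\mu+\nu)$ is a metric midpoint, and a characterisation of this specific midpoint (via comparison against further test measures, in the spirit of the Dolinar--Moln\'ar scheme) should force $\phi$ to preserve it, and hence all dyadic, and ultimately all, convex combinations. The second step is to introduce a test family of continuous measures whose distribution functions rise sharply between prescribed heights on short intervals, so that their pairwise Kuiper distances geometrically encode the location of their supporting arcs on the circle $\R\cup\{\infty\}$; tracking $\phi$'s action on this family via convex-linearity yields a bijection of $\R\cup\{\infty\}$, which continuity of $d_{Ku}$ then promotes to a homeomorphism $h$. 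The factorisation $h = g\circ r_x$ follows from a brief computation: with $x := h^{-1}(\infty)$, the map $h\circ r_x^{-1}$ fixes $\infty$ and thus restricts to a homeomorphism $g$ of $\R$.

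The main obstacle is the second step. Since $P_c(\R)$ contains no atomic measures, every ``point'' of $\R\cup\{\infty\}$ must be recovered from limits of increasingly concentrated continuous densities, and one has to show that $\phi$ respects these limits coherently across all base points. A subtlety absent from the analogous Kolmogorov--Smirnov setting is that $d_{Ku}$ treats bounded intervals and their complements symmetrically, so $\infty$ is not metrically distinguished from finite points; the induced $h$ may legitimately send $\infty$ to any $x\in\R$, which is precisely what forces the inversions $r_x$ into the statement. Consequently the Dolinar--Moln\'ar strategy for $d_{KS}$ (where $\pm\infty$ is automatically preserved) cannot transfer verbatim, and a genuinely new argument is needed to detect the dichotomy between $x=\infty$ and $x\in\R$.
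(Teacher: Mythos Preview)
Your ``moreover'' direction is sound. For the main direction, your plan diverges substantially from the paper's, and the two steps you yourself flag as obstacles are genuine gaps that the paper's argument bypasses entirely.

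The paper never proves convex-linearity of $\phi$. Instead it transfers the problem to the circle $\T$ via $\tau(t)=\xi_{2\arctan t}$ and works with the distance-one set $\{\mu\}^c=\{\nu:d(\mu,\nu)=1\}$. Lemma~\ref{mu-c-char_lem} shows that the connected components of $\{\mu\}^c$ correspond bijectively to the open arcs of $\T\setminus S_\mu$; since a surjective isometry is a homeomorphism, it preserves the number and the inclusion order of these components, hence preserves the class $P_c^{cs}(\T)$ of measures with arc support and the partial order $S_\mu\subseteq S_\nu$. From this support-inclusion data alone, Lemma~\ref{homeomorphism_lem} extracts a homeomorphism $h:\T\to\T$ with $S_{\psi(\mu)}=h(S_\mu)$, without any appeal to affine structure. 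After conjugating by $h$, the infimum identity of Lemma~\ref{inf_lem},
\[
1-\mu(I)=\inf\{d_{Ku}(\mu,\vartheta):\vartheta\in P_c(\R),\,S_\vartheta\subseteq I\},
\]
shows that the conjugated isometry preserves $\mu(I)$ for every closed interval $I$ and is therefore the identity. Pulling $h$ back through $\tau$ yields the factorisation $g\circ r_x$ with $x$ determined by $h(-1)$.

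Your midpoint step is not merely underspecified but genuinely problematic: $\tfrac12(\mu+\nu)$ is a metric midpoint for $d_{Ku}$, but you offer no mechanism to single it out among all midpoints, and the Dolinar--Moln\'ar characterisation for $d_{KS}$ relies on the linear order of $\R$ in a way that the circular symmetry of $d_{Ku}$ destroys. Your second step inherits this: without convex-linearity you cannot track how $\phi$ acts on your test family. The paper's support-based route is precisely what replaces both steps---the sets $\{\mu\}^c$ encode the support combinatorics completely enough to build $h$, and Lemma~\ref{inf_lem} then recovers all interval masses purely metrically. Your observation that $\infty$ is not metrically distinguished under $d_{Ku}$ is exactly why the paper passes to $\T$ first; there the point $\xi_\pi$ plays no special role and the homeomorphism $h$ of $\T$ absorbs the dichotomy between $x=\infty$ and $x\in\R$ automatically.
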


\begin{theorem}\label{main-ac_thm}
Let $\phi\colon P_{ac}(\R) \to P_{ac}(\R)$ be a surjective map which is an isometry with respect to the Kuiper metric, i.e. we have
$$
d_{Ku}(\mu,\nu) = d_{Ku}(\phi(\mu),\phi(\nu)) \qquad (\mu,\nu\in P_{ac}(\R)).
$$
Then there exists a bijective function $g\colon \R\to\R$ such that $g$ and $g^{-1}$ are locally (i.e. on every compact interval) absolutely continuous, and an $x\in\R\cup\{\infty\}$ such that we have
$$
\phi(\mu) = \mu\circ (g \circ r_x) \qquad (\mu \in P_{ac}(\R)).
$$
Moreover, every transformation with this form is a surjective Kuiper isometry on $P_{ac}(\R)$.
\end{theorem}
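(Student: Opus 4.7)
The plan is to extend $\phi$ from $P_{ac}(\R)$ to a surjective Kuiper isometry of the larger space $P_c(\R)$, apply Theorem \ref{main-c_thm} there to obtain a homeomorphism $g$ and a point $x \in \R \cup \{\infty\}$, and then use the hypothesis that $\phi$ leaves $P_{ac}(\R)$ invariant to upgrade $g$ to a locally absolutely continuous bijection whose inverse is also locally absolutely continuous.

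First I would verify that $(P_c(\R), d_{Ku})$ is complete and that $P_{ac}(\R)$ is dense in it. Completeness is clear: $P_c(\R)$ is closed in the complete space $(P(\R), d_{KS})$, since uniform limits of continuous distribution functions are continuous, and $d_{Ku}$, $d_{KS}$ are Lipschitz equivalent on $P(\R)$. For density, given $\mu \in P_c(\R)$ set $\mu_\varepsilon := \mu * \gamma_\varepsilon \in P_{ac}(\R)$, where $\gamma_\varepsilon$ is the centred Gaussian of variance $\varepsilon$; uniform continuity of $f_\mu$ on $\R$ (immediate from its monotonicity, continuity, and limits at $\pm\infty$) yields $\|f_\mu - f_{\mu_\varepsilon}\|_\infty \to 0$, and hence $d_{Ku}(\mu, \mu_\varepsilon) \to 0$ via (\ref{Kuiper-def_eq}). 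Uniform continuity of the isometry $\phi$ then extends it uniquely to a surjective isometry $\tilde\phi\colon P_c(\R) \to P_c(\R)$, and Theorem \ref{main-c_thm} supplies a homeomorphism $g\colon \R \to \R$ and $x \in \R \cup \{\infty\}$ with $\tilde\phi(\mu) = \mu \circ (g \circ r_x)$ for every $\mu \in P_c(\R)$, so in particular the desired formula holds on $P_{ac}(\R)$.

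The crux of the argument, which I expect to be the main obstacle, is upgrading the homeomorphism $g$ to a locally absolutely continuous bijection whose inverse is also locally absolutely continuous. Write $h := g \circ r_x$. Since $\phi(P_{ac}(\R)) \subseteq P_{ac}(\R)$, one has $\mu \circ h \ll m$ for every $\mu \ll m$; testing with a standard Gaussian (whose null sets are exactly the Lebesgue null sets) reduces this to the implication $m(B) = 0 \Rightarrow m(h(B)) = 0$, that is, $h$ has the Luzin N property. The same reasoning applied to $\tilde\phi^{-1}$, which is a surjective Kuiper isometry of $P_c(\R)$ restricting to $\phi^{-1}$ on $P_{ac}(\R)$ and which is visibly given by $\mu \mapsto \mu \circ h^{-1}$, yields the Luzin N property for $h^{-1}$. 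Because $r_x$ is either the identity or a $C^\infty$ diffeomorphism of $\R \setminus \{x\}$ onto $\R \setminus \{0\}$, both Luzin N properties descend from $h$ and $h^{-1}$ to $g$ and $g^{-1}$. Being a monotone continuous function of one variable, $g$ has bounded variation on every compact interval, so the Banach--Zaretskii characterisation of absolute continuity upgrades $g$, and in the same way $g^{-1}$, to local absolute continuity.

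For the converse, given $g$ and $x$ as stated, local absolute continuity of $g$ and $g^{-1}$, together with smoothness of $r_x$, forces $h = g \circ r_x$ and $h^{-1}$ to send Lebesgue null sets to Lebesgue null sets, so $\mu \mapsto \mu \circ h$ is a well-defined bijection of $P_{ac}(\R)$ onto itself. Monotonicity of $h$ on each component of its domain, combined with the second equality in (\ref{Kuiper-def_eq}), then shows that the map preserves the Kuiper metric, the missing point being irrelevant for continuous measures.
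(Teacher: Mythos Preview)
Your argument is correct and takes a genuinely different route from the paper. The paper does not extend $\phi$ to $P_c(\R)$; instead it reruns the entire proof of Theorem~\ref{main-c_thm} inside $P_{ac}(\T)$, using the absolutely continuous versions of Lemmas~\ref{mu-c-char_lem}, \ref{homeomorphism_lem} and \ref{inf_lem} (each of which already carries a parallel $P_{ac}$-statement). In particular, the Luzin~N property of the homeomorphism is obtained there directly within Lemma~\ref{homeomorphism_lem}, from the observation that supports of absolutely continuous measures must have positive Lebesgue measure. Your route is more modular: the density/completeness extension lets you invoke Theorem~\ref{main-c_thm} as a black box and then recover the Luzin~N property for $g$ and $g^{-1}$ a posteriori by testing with a single Gaussian. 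The paper's route avoids the extension step and the attendant completeness and density checks, at the cost of requiring the parallel $P_{ac}$-variants of every preparatory lemma. Both arguments converge on the Banach--Zarecki theorem for the final upgrade to local absolute continuity. One small point worth making explicit in your write-up: surjectivity of the extension $\tilde\phi$ is not automatic from uniform continuity alone---you should extend $\phi^{-1}$ as well and note that the two extensions compose to the identity on the dense subset $P_{ac}(\R)$, hence everywhere.
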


As a consequence of the above two theorems we have that every surjective Kuiper isometry on $P_{ac}(\R)$ can be extended to a surjective Kuiper isometry on $P_c(\R)$. 
But on the contrary, as we shall see from the next theorem, a similar conclusion does not hold for the classes $P(\R)$ and $P_c(\R)$.
The reason is the following: unlike on $P_c(\R)$, all surjective Kuiper isometries on $P(\R)$ transform measures with compact support to measures of the same type.

\begin{theorem}\label{main-gen_thm}
Let $\phi\colon P(\R) \to P(\R)$ be a surjective Kuiper isometry, i.e. we have
$$
d_{Ku}(\mu,\nu) = d_{Ku}(\phi(\mu),\phi(\nu)) \qquad (\mu,\nu\in P(\R)).
$$
Then there exists a homeomorphism $g\colon \R\to\R$ such that
$$
\phi(\mu) = \mu\circ g \qquad (\mu \in P(\R)).
$$
Moreover, every transformation of this form is a surjective Kuiper isometry on $P(\R)$.
\end{theorem}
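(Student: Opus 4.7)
The proof combines a purely metric characterisation of Dirac measures inside $(P(\R), d_{Ku})$ with the already-proved Theorem \ref{main-c_thm}. For $\mu \in P(\R)$ and $\epsilon \in (0, 1)$, define
\[
D(\mu, \epsilon) := \sup\bigl\{d_{Ku}(\nu_1, \nu_2) : \nu_1, \nu_2 \in P(\R),\, d_{Ku}(\mu, \nu_i) \leq \epsilon\bigr\}.
\]
Using $d_{Ku}(\delta_x, \nu) = 1 - \nu(\{x\})$, any $\nu \in \overline{B}(\delta_x, \epsilon)$ has $\nu(\{x\}) \geq 1 - \epsilon$; a short case analysis (splitting on whether $x \in I$) gives $|\nu_1(I) - \nu_2(I)| \leq \epsilon$ for every interval $I$, so $D(\delta_x, \epsilon) = \epsilon$. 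For any non-Dirac $\mu$ one exhibits $\nu_1, \nu_2 \in \overline{B}(\mu, \epsilon)$ with $d_{Ku}(\nu_1, \nu_2) = 2\epsilon$ by redistributing $\epsilon$-mass inside $\mu$ in opposite directions: between two atoms, between a partial atom and the continuous part, or --- when $\mu \in P_c(\R)$ --- between the two halves of $\mu$ cut at the median of $f_\mu$. Thus Diracs are characterised by the metric condition $D(\mu, \epsilon) < 2\epsilon$ for some small $\epsilon$, and $\phi$ therefore yields a bijection $h\colon\R \to \R$ with $\phi(\delta_x) = \delta_{h(x)}$. The isometry equation $d_{Ku}(\phi(\nu), \delta_{h(x)}) = d_{Ku}(\nu, \delta_x)$ then gives the atom-preservation identity $\phi(\nu)(\{h(x)\}) = \nu(\{x\})$ for every $\nu \in P(\R)$ and $x \in \R$.

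It follows that $\nu \in P_c(\R)$ iff $\phi(\nu) \in P_c(\R)$, so $\phi|_{P_c(\R)}$ is a surjective Kuiper isometry of $P_c(\R)$. Theorem \ref{main-c_thm} furnishes a homeomorphism $g_0\colon\R\to\R$ and an $x_0 \in \R \cup \{\infty\}$ with $\phi(\mu) = \mu \circ (g_0 \circ r_{x_0})$ for all $\mu \in P_c(\R)$. For fixed $z \in \R$ and $\alpha \in (0, 1)$, the map sending $\mu_c \in P_c(\R)$ to the continuous part of $\phi((1-\alpha)\delta_z + \alpha\mu_c)$, rescaled by $1/\alpha$, is itself a surjective Kuiper isometry of $P_c(\R)$; letting $\alpha \to 1^-$ and using continuity of $\phi$ forces it to coincide with $\phi|_{P_c(\R)}$. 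Combined with atom preservation this gives
\[
\phi\bigl((1-\alpha)\delta_z + \alpha\mu_c\bigr) = (1-\alpha)\delta_{h(z)} + \alpha\bigl(\mu_c \circ (g_0 \circ r_{x_0})\bigr).
\]
Iterating over several atoms, and using that measures of the form ``finitely many atoms plus a continuous part'' are $d_{Ku}$-dense in $P(\R)$, extends the formula to every $\mu \in P(\R)$: the atomic part of $\phi(\mu)$ is transported from that of $\mu$ via $h$, while the continuous part equals the continuous part of $\mu$ pushed through $g_0 \circ r_{x_0}$.

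Finally, if $x_0 \in \R$ then $g_0 \circ r_{x_0}$ is only a homeomorphism of $\R \setminus \{x_0\}$ onto $\R \setminus \{g_0(0)\}$, so the unique bijection $g\colon\R\to\R$ encoded by the displayed formula must satisfy $g = g_0 \circ r_{x_0}$ off $x_0$ together with $g(x_0) = g_0(0)$, and is therefore discontinuous at $x_0$. One then checks by direct computation that the map $\mu \mapsto \mu \circ g$ with such a discontinuous $g$ fails to be a Kuiper isometry: taking, for instance, $\mu = \tfrac14\delta_{x_0 - \eta} + \tfrac14\delta_{x_0 + \eta} + \tfrac12\delta_{N}$ and $\nu = \tfrac14\delta_{x_0} + \tfrac34\delta_{N}$ for a small $\eta>0$ and an auxiliary point $N$ with $g_0^{-1}(N)$ bounded away from $0$, one finds an interval bracketing $g_0(0)$ in the image on which $|\phi(\mu)-\phi(\nu)|$ strictly exceeds $d_{Ku}(\mu,\nu)$. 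This contradicts the isometry hypothesis, so $x_0 = \infty$, $r_{x_0} = \mathrm{id}$, and setting $g := g_0$ gives a homeomorphism of $\R$ with $\phi(\mu) = \mu \circ g$ for every $\mu \in P(\R)$. The converse is immediate: any homeomorphism $g$ of $\R$ is monotone, so $\{g(I) : I \in \irI\} = \irI$, and the interval-supremum formula \eqref{Kuiper-def_eq} is preserved. The principal technical obstacle is the sharp dichotomy $D(\delta_x,\epsilon) = \epsilon$ versus $D(\mu,\epsilon) = 2\epsilon$ in the first step, which drives the entire argument; the final rigidity step requires only an explicit counterexample once the formula for $\phi$ has been pinned down.
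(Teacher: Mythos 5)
Your opening moves are sound: the dichotomy $D(\delta_x,\epsilon)=\epsilon$ versus $D(\mu,\epsilon)=2\epsilon$ for small $\epsilon$ is a correct metric characterisation of Dirac measures (a genuine alternative to the paper's criterion $\card\big((\{\mu\}^1)^1\big)=1$), and the identities $\phi(\delta_x)=\delta_{h(x)}$ and $\phi(\nu)(\{h(x)\})=\nu(\{x\})$ follow exactly as you say, as does the applicability of Theorem \ref{main-c_thm} to $\phi|_{P_c(\R)}$. The argument breaks down afterwards, in two places. First, the extension step: each map $T_{\alpha,z}\colon \mu_c\mapsto \tfrac{1}{\alpha}\big[\phi((1-\alpha)\delta_z+\alpha\mu_c)-(1-\alpha)\delta_{h(z)}\big]$ is indeed a surjective Kuiper isometry of $P_c(\R)$, but the assertion that ``letting $\alpha\to 1^-$ and using continuity forces it to coincide with $\phi|_{P_c(\R)}$'' is a non sequitur: pointwise convergence of a family of isometries to $\phi|_{P_c(\R)}$ as $\alpha\to 1$ says nothing about $T_{\alpha,z}$ for a fixed $\alpha<1$, since the parameter space of maps $\mu\mapsto\mu\circ(g\circ r_x)$ is connected and nothing yet rules out a nonconstant path $\alpha\mapsto T_{\alpha,z}$. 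Second, and relatedly, your exclusion of $x_0\in\R$ tests the would-be formula on \emph{purely atomic} measures, whose images under $\phi$ are governed solely by $h$; the claim that an interval ``bracketing $g_0(0)$'' detects a discrepancy presupposes that $h$ agrees with $(g_0\circ r_{x_0})^{-1}$ away from the exceptional point, which is precisely what has not been established (note that $d_{Ku}(\delta_p,\mu_c)=1$ for \emph{every} continuous $\mu_c$, so distances between Diracs and continuous measures carry no information linking $h$ to $g_0$ and $x_0$; one must use genuinely mixed measures or support data).

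The paper closes both gaps with two ideas you are missing. To tie $h$ to the continuous part it introduces the co-interval support $C_\mu$ and shows, via the description of $\{\mu\}^1\setminus\Delta$ for non-atomic $\mu$ and measures such as $\tfrac12\delta_x+\tfrac12\delta_y$, that $C_\mu\subseteq C_\nu\iff C_{\phi(\mu)}\subseteq C_{\phi(\nu)}$; taking continuous $\mu_n$ with $C_{\mu_n}$ shrinking to a point then forces $x_0=\infty$ and $h=g_0^{-1}$ (your inversion case dies because $\{h(g_0(0))\}$ would have to lie in $\bigcap_n\big(\R\setminus[-n+x_0,n+x_0]\big)=\emptyset$). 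With $h$ a homeomorphism in hand, the extension to all of $P(\R)$ needs no decomposition into atomic and continuous parts at all: composing $\phi$ with $\mu\mapsto\mu\circ h^{-1}$ yields an isometry fixing every atom in place, hence fixing every purely atomic measure, and purely atomic measures are $d_{Ku}$-dense in $P(\R)$, so this composition is the identity by continuity. If you want to salvage your route, you must replace the $\alpha\to 1$ limit by an argument of this kind; as written, the proof is incomplete.
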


The first two theorems will be proven in Section \ref{c-ac_thm_sec}.
Our method will be to transform our problem to another one which considers Borel probability measures on the unit circle $\T$ of $\R^2$.
This will help us to avoid some technical difficulties.
The last result will be verified in Section \ref{gen_thm_sec}.
In order to give that characterisation, our first step will be to establish a metric characterisation of Dirac measures in a similar way as it was done in \cite{KSisom1} and \cite{Lisom}.
After that we shall utilise Theorem \ref{main-c_thm}.


\section{Proofs of the continuous cases} \label{c-ac_thm_sec}

We begin with proving that in \eqref{Kuiper-def_eq} we can take maximum if $I$ runs through $\irI_0$.

\begin{lem}\label{Kuiper-c-max_lem}
For every $\mu,\nu\in P(\R)$ we have the following equation:
\begin{equation}\label{Kuiper_eq}
d_{Ku}(\mu,\nu) = \max\{|\mu(I)-\nu(I)|\colon I\in\irI_0\}.
\end{equation}
\end{lem}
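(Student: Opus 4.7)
The first two equalities in \eqref{Kuiper-def_eq} already give $d_{Ku}(\mu,\nu) = \sup_t h(t) - \inf_t h(t)$, where $h := f_\mu - f_\nu$. With $M := \sup h$ and $m := \inf h$, my plan is to produce an explicit interval $I \in \irI_0$ satisfying $|\mu(I) - \nu(I)| = M - m$; this simultaneously converts $\sup$ into $\max$ and accounts for the degenerate intervals in $\irI_0 \setminus \irI$.

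The function $h$ is bounded, right-continuous, has at most countably many jumps, and tends to $0$ at $\pm\infty$. My first step is to show that each of $M$ and $m$ is realised at some point $t_0 \in [-\infty, +\infty]$ in exactly one of three senses: as a value $h(t_0)$ with $t_0 \in \R$; as a left-limit $h(t_0^-) := \lim_{s \uparrow t_0} h(s)$ with $t_0 \in \R$; or only as the limit $h(\pm\infty) = 0$, in which case the extremum itself must be $0$. This follows from a compactness argument: any sequence $t_n$ with $h(t_n) \to M$ has a subsequence converging in the compactification $[-\infty, +\infty]$; if the limit $t_0$ is finite and the subsequence approaches it from above or is stationary, right-continuity yields $M = h(t_0)$; if it approaches $t_0$ strictly from below, we get $M = h(t_0^-)$; if $t_0 = \pm\infty$, then $M = 0$. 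The analogous dichotomy covers $m$.

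Once $M$ is realised at some $t_0$ and $m$ at some $s_0$, each carrying a type tag (value, left-limit, or infinity), I build $I$ by taking endpoints $\min(t_0,s_0)$ and $\max(t_0,s_0)$, with the convention that a finite right-endpoint is included iff the extremum there is a right-continuous value, a finite left-endpoint is included iff the extremum there is a left-limit, and any infinite endpoint is open. A direct calculation using the standard identities $\mu((a,b]) = f_\mu(b) - f_\mu(a)$, $\mu([a,b]) = f_\mu(b) - f_\mu(a^-)$, $\mu((a,b)) = f_\mu(b^-) - f_\mu(a)$, $\mu([a,b)) = f_\mu(b^-) - f_\mu(a^-)$, and the obvious unbounded variants (with $f_\mu(-\infty) = 0$ and $f_\mu(+\infty) = 1$), then yields $|\mu(I)-\nu(I)| = M - m$. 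The degenerate case $t_0 = s_0 \in \R$ is exactly where $\{x\} \in \irI_0 \setminus \irI$ is indispensable: here $\{M,m\} = \{h(t_0), h(t_0^-)\}$, and $I = \{t_0\}$ gives $|\mu(\{t_0\})-\nu(\{t_0\})| = |h(t_0)-h(t_0^-)| = M - m$. The fully degenerate case $t_0 = s_0 = \pm\infty$ forces $M = m = 0$, whence $\mu = \nu$ and the claim is trivial.

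The main obstacle is organisational rather than analytic: the case analysis over the three possible types for each of $M$ and $m$ must be matched to the correct open/closed/unbounded structure of the endpoints of $I$. Once this bookkeeping is fixed, each verification reduces to a one-line computation from the one-sided continuity properties of $f_\mu$ and $f_\nu$.
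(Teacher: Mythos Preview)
Your proposal is correct and follows essentially the same approach as the paper: both proofs use compactness in $[-\infty,+\infty]$ to extract convergent subsequences along which the two suprema in \eqref{Kuiper-def_eq} are approached, then invoke one-sided continuity of the distribution functions to determine whether each endpoint of the resulting interval should be open or closed. Your framing via $h=f_\mu-f_\nu$, $M=\sup h$, $m=\inf h$ unifies what the paper splits into the cases $f_\mu\lessgtr f_\nu$ versus $f_\mu$, $f_\nu$ incomparable, but the underlying argument is the same.
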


\begin{proof}
If $d_{Ku}(\mu,\nu) = 0$, then we may choose $I = \{x_0\}$ with $\mu(\{x_0\}) = \nu(\{x_0\}) = 0$.
Therefore we may assume throughout the proof that $d_{Ku}(\mu,\nu) > 0$.

First, let us assume that $f_\mu \leq f_\nu$ (the $f_\mu \geq f_\nu$ case is similar).
Then we clearly have 
$$
d_{Ku}(\mu,\nu) = \sup_{t\in\R} (f_\nu(t)-f_\mu(t)).
$$
Since the limits of the distribution functions at $-\infty$ and $\infty$ are finite and they coincide, we can find a bounded sequence $\{t_n\}_{n=1}^\infty$ such that $\lim_{n\to\infty} f_\nu(t_n)-f_\mu(t_n) = d_{Ku}(\mu,\nu)$.
Since every bounded sequence has a convergent subsequence, we may assume that our sequence was convergent.
Let $t_0 := \lim_{n\to\infty} t_n$.
We can also suppose that we have either $t_n \geq t_0$ $(n\in\N)$ or $t_n < t_0$ $(n\in\N)$.
If the first possibility occurs, then we obtain
$$
d_{Ku}(\mu,\nu) = \lim_{n\to\infty} f_\nu(t_n)-f_\mu(t_n) = f_\nu(t_0)-f_\mu(t_0) = \nu((-\infty,t_0])-\mu((-\infty,t_0]).
$$
For the second one, we infer
$$
d_{Ku}(\mu,\nu) = \lim_{n\to\infty} f_\nu(t_n)-f_\mu(t_n) = f_\nu(t_0-)-f_\mu(t_0-) = \nu((-\infty,t_0))-\mu((-\infty,t_0)).
$$

Second, we assume that neither $f_\mu \leq f_\nu$ nor $f_\mu \geq f_\nu$ holds.
In this case there exist bounded sequences $\{t_n\}_{n=1}^\infty$ and $\{s_n\}_{n=1}^\infty$ such that
$$
d_{Ku}(\mu,\nu) = \lim_{n\to\infty} (f_\mu(t_n)-f_\nu(t_n)) + \lim_{n\to\infty} (f_\nu(s_n)-f_\mu(s_n)).
$$
Like in the previous case, we may assume that these sequences are convergent. 
Let $s_0 := \lim_{n\to\infty} s_n$ and $t_0 := \lim_{n\to\infty} t_n$.
We can also suppose that we have either $t_n \geq t_0$ $(n\in\N)$ or $t_n < t_0$ $(n\in\N)$; and a similar assumption can be made on $\{s_n\}_{n=1}^\infty$.
Let us assume that we have $s_n < s_0$ and $t_0 \leq t_n$ $(n\in\N)$.
Then we deduce the following:
$$
d_{Ku}(\mu,\nu) = f_\mu(t_0)-f_\nu(t_0) + f_\nu(s_0-)-f_\mu(s_0-) 
= \left\{
\begin{matrix}
\mu([s_0,t_0]) - \nu([s_0,t_0]) & \text{if } s_0 \leq t_0\\
\nu((t_0,s_0)) - \mu((t_0,s_0)) & \text{if } t_0 < s_0\\
\end{matrix}
\right..
$$
The other cases can be handled similarly.
\end{proof}

Let $\xi_t = (\cos t, \sin t)\in\R^2$ $(t\in\R)$, and let us consider the unit circle $\T = \{\xi_t \colon t\in[-\pi,\pi)\}$, and the following continuous map:
$$
\tau\colon \R\to\T, \; \tau(t) = \xi_{2 \arctan t}.
$$
The set of all (possibly degenerate) arcs of $\T$ is denoted by $\irA_0$.
The sets of all non-degenerate arcs, closed (possibly degenerate) arcs, and open arcs will be denoted by the symbols $\irA$, $\irA^{cl}$ and $\irA^{op}$, respectively.
(Note that none of $\irA$, $\irA_0$, $\irA^{cl}$ or $\irA^{op}$ contains the empty set).
The spaces of all continuous and absolutely continuous (with respect to the normalised arc-length measure $\lambda$ on $\T$) Borel probability measures will be denoted by $P_c(\T)$ and $P_{ac}(\T)$, respectively.
We define the following function:
\begin{equation}\label{T-Kuiper-def_eq}
d\colon P_c(\T)\times P_c(\T) \to \R, \; d(\mu,\nu) = \max\{|\mu(A)-\nu(A)|\colon A\in\irA_0\}.
\end{equation}
Clearly, the map $\tau$ is a homeomorphism between $\R$ and $\T\setminus\{\xi_\pi\}$.
We observe the following:
$$
d(\mu,\nu) = d_{Ku}(\mu\circ\tau,\nu\circ\tau) \quad (\mu,\nu\in P_c(\T)),
$$
where $\mu\circ\tau \in P_c(\R)$ is defined by $(\mu\circ\tau)(B) = \mu(\tau(B))$ $(B\in\irB_\R)$.
In order to verify this, we observe two properties.
First, that we have $d_{Ku}(\mu,\nu) = \max\{|\mu(\R\setminus I)-\nu(\R\setminus I)|\colon I\in\irI_0\}$.
Second, that for every $I \in\irI_0$ the sets $\tau(I)$ and $\{-1\}\cup\tau(\R\setminus I)$ are arcs of $\T$, and that for every arc $A\subset\T$ the set $\tau^{-1}(A)$ (inverse image) is either an interval or the complement of an interval.
Therefore we conclude that $d$ is a metric on $P_c(\T)$. 
We note that $\mu$ is absolutely continuous exactly when $\mu\circ\tau$ is.
In fact, this is a consequence of the fact that the derivatives of the tangent and arctangent functions are bounded from below and above by some positive numbers on every compact interval of $(-\tfrac{\pi}{2},\tfrac{\pi}{2})$ and $\R$, respectively.

In order to prove Theorems \ref{main-c_thm} and \ref{main-ac_thm} first, we investigate surjective isometries on $P_c(\T)$ and $P_{ac}(\T)$ with respect to the metric $d$.
For any $\mu\in P_c(\T)$ we will use the following notation:
$$
\{\mu\}^c := \{\nu\in P_c(\T) \colon d(\mu,\nu) = 1\}.
$$
If $\mu\in P_{ac}(\T)$, then we define 
$$
\{\mu\}^{ac} := \{\nu\in P_{ac}(\T) \colon d(\mu,\nu) = 1\}.
$$
The closed support of $\mu\in P_c(\T)$ (or $\mu\in P(\R)$, respectively) is the smallest closed set $S_\mu$ of $\T$ (or $\R$, resp.) such that we have $\mu(S_\mu) = 1$, or equivalently, the complement of the union of all open arcs (or intervals, resp.) which have zero $\mu$-measure.
It is an elementary fact that every non-empty (relatively) open subset $U$ of $\T$ (or $\R$, resp.) can be written as a countable disjoint union of open arcs (or intervals, resp.): $U = \cup_{j=1}^n U_j$, where $n\in\N\cup\{\infty\}$ and $U_j$'s are exactly the connected components of $U$.
Now, we give a characterisation of the set $\{\mu\}^c$ for every $\mu\in P_c(\T)$.

\begin{lem}\label{mu-c-char_lem}
Let $\mu\in P_c(\T)$ and $\T\setminus S_\mu = \cup_{j=1}^n U_j$ where $n\in\N\cup\{\infty\}$ and $U_j$'s are the connected components of $\T\setminus S_\mu$.
Then we have 
\begin{equation}\label{mu-c_eq}
\{\mu\}^c = \bigcup_{j=1}^n \{\nu\in P_c(\T) \colon \nu(U_j) = 1\}.
\end{equation}
In case when $\mu\in P_{ac}(\T)$, then we have
\begin{equation}\label{mu-ac_eq}
\{\mu\}^{ac} = \bigcup_{j=1}^n \{\nu\in P_{ac}(\T) \colon \nu(U_j) = 1\}.
\end{equation}
Moreover, the terms in the unions of \eqref{mu-c_eq} and \eqref{mu-ac_eq} are exactly the connected components, i.e. the sets $\{\mu\}^c$ and $\{\mu\}^{ac}$ have exactly $n$ connected components.
In particular, $\{\mu\}^c$ (or $\{\mu\}^{ac}$) is non-empty and connected if and only if $S_\mu \neq \T$ and $S_\mu$ is connected; 
and $\{\mu\}^c = \emptyset$ (or $\{\mu\}^{ac} = \emptyset$) exactly when $S_\mu = \T$ holds.
\end{lem}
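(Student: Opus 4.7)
The plan is to prove the set equality \eqref{mu-c_eq} by double inclusion, and then read off the component structure from the observation that distinct pieces $\{\nu:\nu(U_j)=1\}$ sit at the maximal distance $1$ from one another; the absolutely continuous version \eqref{mu-ac_eq} and the "in particular" clause then follow with essentially no extra work.

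The easy inclusion is immediate: if $\nu\in P_c(\T)$ and $\nu(U_j)=1$, testing with the open arc $A=U_j\in\irA_0$ yields $\mu(U_j)=0$ (because $U_j\subseteq\T\setminus S_\mu$), hence $|\mu(A)-\nu(A)|=1$ and therefore $d(\mu,\nu)=1$. For the converse, suppose $d(\mu,\nu)=1$; by \eqref{T-Kuiper-def_eq} the maximum is attained at some $A\in\irA_0$, so either $\mu(A)=1,\ \nu(A)=0$ or $\mu(A)=0,\ \nu(A)=1$. I would exploit continuity of both measures to replace $A$ by its closure or interior at will. In the first case, $A$ may be taken to be a proper closed arc (it cannot be $\T$, since $\nu(A)=0$), so that $V:=\T\setminus A$ is a nonempty open arc with $\mu(V)=0$. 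In the second case, $A$ may be taken open and nonempty (it cannot be a single point, by continuity of $\nu$) with $\mu(A)=0$. Either way, the open arc in question is disjoint from $S_\mu$, hence, being connected, is contained in exactly one component $U_j$, which forces $\nu(U_j)=1$.

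For the component structure, set $C_j:=\{\nu\in P_c(\T):\nu(U_j)=1\}$. Each $C_j$ is nonempty, since the normalised arc-length measure on $U_j$ lies in $C_j$ (and is in fact absolutely continuous, so the same element witnesses nonemptiness of the corresponding $P_{ac}$ piece). Each $C_j$ is path-connected via the convex combination $(1-t)\nu_1+t\nu_2$, which stays in $C_j$ and depends $1$-Lipschitz-continuously on $t$ with respect to $d$. Distinct pieces are disjoint, because $\nu(U_j)=1$ forces $\nu(U_k)=0$ for $k\neq j$. Moreover, any two points of distinct pieces sit at distance exactly $1$: testing with the arc $U_k$ gives $|\nu(U_k)-\nu'(U_k)|=1$ whenever $\nu\in C_j,\ \nu'\in C_k$. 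Hence every $C_j$ is clopen in $\{\mu\}^c$, and the $C_j$'s are precisely the $n$ connected components. The absolutely continuous case \eqref{mu-ac_eq} is now read off by restricting the same argument to $P_{ac}(\T)\subseteq P_c(\T)$ and noting that convex combinations preserve absolute continuity. The final clause of the lemma is immediate: $\{\mu\}^c$ is nonempty iff $n\geq 1$ (equivalently $S_\mu\neq\T$) and connected iff additionally $n=1$ (equivalently $S_\mu$ is a connected proper closed subset of $\T$).

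The only genuinely delicate point is the closure/interior swap in the inclusion argument; it uses continuity of $\mu,\nu$ in an essential way, and it is the same observation that turns the supremum in \eqref{T-Kuiper-def_eq} into an attained maximum.
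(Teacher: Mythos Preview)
Your proof is correct and follows essentially the same approach as the paper: both reduce the equality \eqref{mu-c_eq} to the observation that $d(\mu,\nu)=1$ forces the existence of an open arc $A$ with $\mu(A)=0$ and $\nu(A)=1$, hence $A\subseteq U_j$ for some $j$; and both obtain the component structure from the fact that the pieces $C_j$ are convex (hence path-connected) while distinct pieces lie at mutual distance $1$. The only cosmetic difference is that the paper collapses your two cases into one by noting that the complement of an arc is again an arc, whereas you treat $\mu(A)=1,\ \nu(A)=0$ and $\mu(A)=0,\ \nu(A)=1$ separately; you are also a bit more explicit than the paper about the nonemptiness of each $C_j$.
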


\begin{proof}
We have
$$
\begin{gathered}
\{\mu\}^c = \{\nu\in P_c(\T) \colon \exists\; A \in \irA_0 \text{ such that } \\ \text{either } \mu(A) = 0 \text{ and } \nu(A) = 1, \text{ or } \mu(\T\setminus A) = 0  \text{ and } \nu(\T\setminus A) = 1\}.
\end{gathered}
$$
Since we have $\T\setminus A \in \irA_0$ for every $A \in \irA_0$, and the measure of a closed arc and its interior are the same for continuous measures, we infer
$$
\{\mu\}^c = \{\nu\in P_c(\T) \colon \exists\; A\in\irA^{op} \text{ such that } \mu(A) = 0, \nu(A) = 1\}.
$$
But $\mu(A) = 0$ holds if and only if $A\cap S_\mu = \emptyset$, therefore we conclude \eqref{mu-c_eq}.
The equation \eqref{mu-ac_eq} can be obtained in a similar way.

For the other statement we make two observations.
On one hand, if $\nu_1,\nu_2\in\{\mu\}^c$, $\nu_1(U_{j_1}) = \nu_2(U_{j_2}) = 1$ and $j_1\neq j_2$, then we easily conclude
$$
1\geq d(\nu_1,\nu_2) \geq |\nu_1(U_{j_1})-\nu_2(U_{j_1})| = 1,
$$
and thus $d(\nu_1,\nu_2) = 1$.
This readily implies that the number of connected components of $\{\mu\}^c$ is at least $n$.
On the other hand, if $\nu_1,\nu_2\in\{\mu\}^c$, $\nu_1(U_{j_0}) = \nu_2(U_{j_0}) = 1$, then we consider
$$
\gamma\colon [0,1] \to P_c(\T), \; \gamma(t) = (1-t)\cdot\nu_1 + t\cdot\nu_2.
$$
Since we have 
$$
d(\gamma(s), \gamma(t)) = \max\{|(1-t)\cdot\nu_1(A) + t\cdot\nu_2(A)-(1-s)\cdot\nu_1(A) - s\cdot\nu_2(A)| \colon A\subset\T \text{ is an arc}\}
$$
$$
= |s-t|\cdot\max\{|\nu_1(A) - \nu_2(A)| \colon A\subset\T \text{ is an arc}\} = |s-t|\cdot d(\nu_1,\nu_2) \leq |s-t|,
$$
the curve $\gamma$ connects $\nu_1$ with $\nu_2$ in $\{\nu\in P_c(\T) \colon \nu(U_{j_0}) = 1\}$, whence the connectedness of $\{\nu\in P_c(\T) \colon \nu(U_{j_0}) = 1\}$ is yielded.
Finally, the proof for $\{\mu\}^{ac}$ is almost the same.
\end{proof}

Next, we set
$$
P_c^{cs}(\T) := \big\{\mu\in P_c(\T) \colon \{\mu\}^c = \emptyset \text{ or } \{\mu\}^c \text{ is connected} \big\} = \{\mu\in P_c(\T) \colon S_\mu \in\irA^{cl}\}
$$ 
and 
$$
P_{ac}^{cs}(\T) := \big\{\mu\in P_{ac}(\T) \colon \{\mu\}^{ac} = \emptyset \text{ or } \{\mu\}^{ac} \text{ is connected} \big\} = \{\mu\in P_{ac}(\T) \colon S_\mu \in\irA^{cl}\}.
$$
Let $\psi\colon P_c(\T)\to P_c(\T)$ be an arbitrary surjective isometry with respect to the metric $d$.
Since $\psi$ is also a homeomorphism, we infer that
$$
\{\mu\}^c = \emptyset \;\iff\; \psi\left(\{\mu\}^c\right) = \{\psi(\mu)\}^c = \emptyset
$$
and
$$
\{\mu\}^c \text{ is conncected} \;\iff\; \{\psi(\mu)\}^c \text{ is conncected,}
$$
whence 
$$
\psi(P_c^{cs}(\T)) = P_c^{cs}(\T)
$$
is yielded.
By \eqref{mu-c_eq} the following equivalences are straightforward:
\begin{equation}\label{bekebelez0}
\begin{gathered}
S_\mu \subseteq S_\nu \;\iff\; \{\nu\}^c \subseteq \{\mu\}^c \;\iff\; \{\psi(\nu)\}^c \subseteq \{\psi(\mu)\}^c \\
\;\iff\; S_{\psi(\mu)} \subseteq S_{\psi(\nu)} \quad (\mu,\nu \in P_c(\T))
\end{gathered}
\end{equation}
and 
\begin{equation}\label{egyenlo0}
\begin{gathered}
S_\mu = S_\nu \;\iff\; \{\nu\}^c = \{\mu\}^c \;\iff\; \{\psi(\nu)\}^c = \{\psi(\mu)\}^c \\
\;\iff\; S_{\psi(\mu)} = S_{\psi(\nu)} \quad (\mu,\nu \in P_c(\T)).
\end{gathered}
\end{equation}
Because of the above observations the following map can be defined:
$$
\eta_\psi\colon \irA^{cl}\to\irA^{cl}, \quad \eta_\psi(S_\mu) = S_{\psi(\mu)} \qquad (\mu\in P_c^{cs}(\T)).
$$
Since $\psi^{-1}$ is also a surjective isometry, we obtain $\eta_\psi^{-1} = \eta_{\psi^{-1}}$.
Clearly, we have the following property:
\begin{equation}\label{bekebelez}
L\subseteq K \;\iff\; \eta_\psi(L)\subseteq\eta_\psi(K) \;\iff\; \eta_\psi^{-1}(L)\subseteq\eta_\psi^{-1}(K) \quad (L, K \in \irA^{cl}).
\end{equation}
In fact, more is true, which is proven in the next lemma.
Let us point out that the verification of the analogues of the previous observations for $P_{ac}(\T)$ is very similar.
We will denote by $\lambda$ the normalised arc-length measure on $\T$.

\begin{lem}\label{homeomorphism_lem}
Let $\psi\colon P_c(\T)\to P_c(\T)$ be a surjective isometry with respect to the metric $d$.
Then there exists a homeomorphism $h\colon \T\to\T$ such that we have
\begin{equation}\label{hKc}
S_{\psi(\mu)} = \eta_\psi(S_\mu) = h(S_{\mu}) \qquad (\mu \in P_c^{cs}(\T)).
\end{equation}

Moreover, if $\psi\colon P_{ac}(\T)\to P_{ac}(\T)$ is a surjective isometry with respect to the metric $d$, then 
\begin{equation}\label{hKac}
S_{\psi(\mu)} = \eta_\psi(S_\mu) = h(S_{\mu}) \qquad (\mu \in P_{ac}^{cs}(\T)).
\end{equation}
holds with a homeomorphism $h\colon \T\to\T$ which preserves sets with zero Lebesgue measure in both directions, i.e. we have 
\begin{equation}\label{LuzinNforh}
\lambda(A) = 0 \;\iff\; \lambda(h(A)) = 0 \quad (A\in\irB_\T).
\end{equation}
\end{lem}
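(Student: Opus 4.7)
The plan is to define $h$ pointwise from $\eta_\psi$ by a nested-arcs construction, identify $\eta_\psi(K)=h(K)$ on arcs, extract continuity from the fact that $h$ and $h^{-1}$ preserve arcs, and—in the absolutely continuous setting—upgrade the support identity to all AC measures before deducing the Luzin-N property.

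For each $x\in\T$, pick a decreasing sequence $(K_n)$ of non-degenerate closed arcs with $\bigcap_n K_n=\{x\}$, and set
\[
h(x):=\bigcap_n\eta_\psi(K_n).
\]
By \eqref{bekebelez} the $\eta_\psi(K_n)$ form a nested sequence of non-degenerate closed arcs, so the intersection is a non-empty closed connected subset of $\T$. I would show it is a singleton: otherwise it would contain a non-degenerate closed arc $L$, and then $\eta_\psi^{-1}(L)=\eta_{\psi^{-1}}(L)$ would be a non-degenerate closed arc sitting inside $\{x\}$, which is impossible. Independence of $(K_n)$ follows by intersecting two candidate sequences, and running the same procedure for $\psi^{-1}$ yields a two-sided inverse, so $h\colon\T\to\T$ is a bijection. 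For a non-degenerate closed arc $K$ and $x\in K$ the shrinking $K_n$ can be chosen inside $K$, giving $h(x)\in\eta_\psi(K)$; the analogous argument for $h^{-1}$ supplies the reverse inclusion, so \eqref{hKc} holds. Because $h$ (and $h^{-1}$, trivially on singletons) sends $\irA^{cl}$ into $\irA^{cl}$ and every open arc is the complement of some element of $\irA^{cl}$, both $h$ and $h^{-1}$ preserve open arcs; as open arcs form a basis of $\T$, $h$ is a homeomorphism. The absolutely continuous version \eqref{hKac} is obtained by the identical argument, using that $\lambda|_K/\lambda(K)\in P_{ac}^{cs}(\T)$ has support $K$ for every non-degenerate closed arc $K$.

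To establish \eqref{LuzinNforh} I would first promote \eqref{hKac} to $S_{\psi(\mu)}=h(S_\mu)$ for every $\mu\in P_{ac}(\T)$. When $S_\mu=\T$ this is automatic, since $\{\mu\}^{ac}=\emptyset$ forces $S_{\psi(\mu)}=\T=h(\T)$. Otherwise decompose $\T\setminus S_\mu=\bigsqcup_j U_j$ and $\T\setminus S_{\psi(\mu)}=\bigsqcup_k U'_k$; by Lemma \ref{mu-c-char_lem} the connected components of $\{\mu\}^{ac}$ and $\{\psi(\mu)\}^{ac}$ are indexed by these open arcs, and since $\psi$ is a $d$-homeomorphism between the two sets it induces a bijection $U_j\leftrightarrow U'_{\sigma(j)}$. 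Feeding $\lambda|_{U_j}/\lambda(U_j)\in P_{ac}^{cs}(\T)$, whose support is the closed arc $\overline{U_j}$, into \eqref{hKac} and using $\sigma$ in both directions yields $h(\overline{U_j})=\overline{U'_{\sigma(j)}}$; since $h$ is a homeomorphism this refines to $h(U_j)=U'_{\sigma(j)}$, and taking complements gives the claimed identity.

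Finally, \eqref{LuzinNforh} falls out: for a closed $F\subset\T$ with $\lambda(F)>0$, the measure $\mu:=\lambda|_F/\lambda(F)$ is AC with $S_\mu\subseteq F$, so $S_{\psi(\mu)}=h(S_\mu)\subseteq h(F)$, and absolute continuity of $\psi(\mu)$ forces $\lambda(h(F))\geq\lambda(S_{\psi(\mu)})>0$. For a general Borel set $A$ with $\lambda(A)=0$, if $\lambda(h(A))>0$ then inner regularity produces a closed $F'\subseteq h(A)$ of positive $\lambda$-measure, and applying the previous conclusion to $\psi^{-1}$ gives $\lambda(h^{-1}(F'))>0$ inside $A$, a contradiction. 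The main technical obstacle is the upgrade of \eqref{hKac} from $P_{ac}^{cs}(\T)$ to arbitrary AC measures: it is precisely this step that ties the purely topological map $h$ to the absolute continuity structure and thereby forces the Luzin-N property.
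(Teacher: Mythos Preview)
Your argument is correct, and the construction of $h$ via nested arcs and the verification of \eqref{hKc}/\eqref{hKac} track the paper closely. There are two genuine differences worth noting. First, for bijectivity and continuity of $h$ you run the nested-arc construction for $\psi^{-1}$ to produce a two-sided inverse and then invoke the open-arc basis; the paper instead proves a separate connectedness statement about $\{\mu\}^c\cap\{\nu\}^c\cap P_c^{cs}(\T)$ to establish $K\cap L=\emptyset\iff h(K)\cap h(L)=\emptyset$, deducing injectivity from that. Your route is shorter and avoids the path-connectedness computation. Second, for \eqref{LuzinNforh} you first upgrade the identity $S_{\psi(\mu)}=h(S_\mu)$ to arbitrary $\mu\in P_{ac}(\T)$ by matching connected components of $\{\mu\}^{ac}$ with those of $\{\psi(\mu)\}^{ac}$. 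This is valid, but it is more than the paper needs: the paper observes directly that for any compact $K$ with $\lambda(K)>0$ and any $\mu\in P_{ac}(\T)$ with $S_\mu=K$, writing $\T\setminus K=\bigcup_j U_j$ and applying \eqref{bekebelez0} to each closed arc $\T\setminus U_j$ gives
\[
h(K)=\bigcap_j h(\T\setminus U_j)=\bigcap_j \eta_\psi(\T\setminus U_j)\supseteq S_{\psi(\mu)},
\]
so $\lambda(h(K))\geq\lambda(S_{\psi(\mu)})>0$ immediately. Thus your upgrade step, while correct and of independent interest, can be bypassed entirely for the Luzin-N conclusion.
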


\begin{proof}
We begin with the first statement.
Let us define a mapping $h\colon \T\to\T$ such that for each $t\in[-\pi,\pi)$ the point $h(\xi_t)$ is an arbitrary one lying in the intersection 
\begin{equation}\label{metszet}
\bigcap_{j=1}^\infty \eta_\psi\big(\{\xi_s\colon |s-t|\leq 1/j\}\big).
\end{equation}
By \eqref{bekebelez} and completeness of the metric of $\T$, the above intersection is a non-empty closed arc, thus $h$ is indeed a function.
Let us assume for a moment that there is a $t\in [-\pi,\pi)$ such that the intersection in \eqref{metszet} is not a point, but a non-degenerate closed arc $K\in\irA^{cl}$.
In this case, \eqref{bekebelez} readily implies the contradiction $\card(\eta_\psi^{-1} (K))\leq 1$.
Therefore each of the above intersections has exactly one element, hence the map $h$ is uniquely determined by the above properties.

Next, we observe that for every $t\in[-\pi,\pi)$ and $\{\alpha_k\}_{k=1}^\infty, \{\beta_k\}_{k=1}^\infty \subset [0,\infty)$ with $\alpha_k \searrow 0$, $\beta_k \searrow 0$ $(k\to\infty)$, $\alpha_k + \beta_k > 0$ $(k\in\N)$ we have
\begin{equation}\label{metszet2}
\bigcap_{k=1}^\infty \eta_\psi\big(\{\xi_s\colon s\in[t-\alpha_k,t+\beta_k] \}\big) = \{h(\xi_t)\}.
\end{equation}
In order to verify this, we observe that there exists a non-decreasing sequence of positive integers $\{j_k\}_{k=1}^\infty$ such that $\lim_{k\to\infty} j_k = \infty$ and $[t-\alpha_k,t+\beta_k] \subseteq [t-1/j_k,t+1/j_k]$ $(k\in\N)$.
Thus, again by \eqref{bekebelez}, the intersection in \eqref{metszet2} is a subset of $\{h(\xi_t)\}$.
But clearly this intersection cannot be empty.

The above observations imply $h(K) \subseteq \eta_\psi(K)$ $(K \in \irA^{cl})$.
Let $K\in \irA^{cl}$ be arbitrary, and let us consider a point $\chi \in \eta_\psi(K)$.
There is a monotone decreasing sequence $\{K_j\}_{j=1}^\infty \subset \irA^{cl}$ such that $K_j\subset \eta_\psi(K)$ $(j\in\N)$ and $\cap_{j=1}^\infty K_j = \{\chi\}$.
Clearly, $\{\eta_{\psi^{-1}}(K_j)\}_{j=1}^\infty = \{\eta_\psi^{-1}(K_j)\}_{j=1}^\infty \subset \irA^{cl}$ is a monotone decreasing sequence of closed arcs such that $\eta_\psi^{-1}(K_j)\subset K$ and $\card\left(\cap_{j=1}^\infty \eta_\psi^{-1}(K_j)\right) = \card\left(\cap_{j=1}^\infty \eta_{\psi^{-1}}(K_j)\right) = 1$ $(j\in\N)$.
Therefore $h(K) = \eta_\psi(K)$, which verifies \eqref{hKc}.

It remains to show that $h$ is a homeomorphism.
Let $\mu, \nu \in P_c^{cs}(\T)$ be arbitrary and set $K = S_\mu$, $L = S_\nu$.
We have 
$$
\{\mu\}^c\cap\{\nu\}^c\cap P_c^{cs}(\T) = \{\vartheta\in P_c^{cs}(\T)\colon \vartheta(\T\setminus(K\cup L)) = 1\}.
$$
On one hand, if $K \cap L = \emptyset$, then $\T\setminus(K\cup L)$ has exactly two connected components $U_1$ and $U_2$, which implies that in this case $\{\mu\}^c\cap\{\nu\}^c\cap P_c^{cs}(\T) = \cup_{j=1}^2 \{\vartheta\in P_c^{cs}(\T)\colon \vartheta(U_j) = 1\}$.
But for any choices $\vartheta_j \in \{\vartheta\in P_c^{cs}(\T)\colon \vartheta(U_j) = 1\}$ $(j=1,2)$, we clearly have $d(\vartheta_1,\vartheta_2) = 1$, which implies that $\{\mu\}^c\cap\{\nu\}^c\cap P_c^{cs}(\T)$ is not a connected set in this case.
On the other hand, if $K \cap L \neq \emptyset$, then $\T\setminus(K\cup L)$ is a connected open set, moreover, we claim that $\{\mu\}^c\cap\{\nu\}^c\cap P_c^{cs}(\T)$ is connected. 
Let $\vartheta_1, \vartheta_2 \in \{\mu\}^c\cap\{\nu\}^c\cap P_c^{cs}(\T)$.
We choose a $\widetilde\vartheta \in P_c^{cs}(\T)$ with 
$$
S_{\vartheta_1} \cup S_{\vartheta_2} \subseteq S_{\widetilde\vartheta} \subseteq \big(\T\setminus(K\cup L)\big)^-,
$$ 
where $\cdot^-$ means the closure of a given set, and consider the following path
$$
\gamma\colon [0,1] \to P_c(\T), \; \gamma(t) = (t-t^2)\cdot\widetilde\vartheta + (1-t+t^2)\cdot\big[ (1-t)\cdot\vartheta_1 + t\cdot\vartheta_2 \big].
$$
Clearly, $\gamma([0,1]) \subseteq \{\mu\}^c\cap\{\nu\}^c\cap P_c^{cs}(\T)$, $\gamma(0) = \vartheta_1$, $\gamma(1) = \vartheta_2$, and a straightforward computation gives the continuity of $\gamma$. 
Thus, indeed, the set $\{\mu\}^c\cap\{\nu\}^c\cap P_c^{cs}(\T)$ is connected.

Applying these observations we get the following:
\begin{equation}\label{hmetszet}
K \cap L = \emptyset \; \iff \; h(K) \cap h(L) = \emptyset \quad (K,L \in\irA^{cl}).
\end{equation}

Clearly, $h(\T) = \eta_\psi(\T) = \T$, thus $h$ is surjective.
Let us consider two different points $\xi_s$ and $\xi_t$ $(s\neq t, s,t\in[-\pi,\pi))$.
Then there are two disjoint closed arcs $K_s, K_t \in \irA^{cl}$ such that $\xi_s \in K_s$ and $\xi_t \in K_t$.
By \eqref{hmetszet} we have $h(K_s) \cap h(K_t) = \emptyset$, whence we infer the bijectivity of $h$.
Finally, let $\{t_n\}_{n=1}^\infty \subset \R$ be a sequence such that $t_n \searrow t$ or $t_n \nearrow t$ $(n\to\infty)$ and $|t_1-t| < \tfrac{\pi}{2}$, and let $K_n$ be the shorter closed arc with endpoints $\xi_t$ and $\xi_{t_n}$.
Since we have $\{h(\xi_t)\} = \cap_{n=1}^\infty h(K_n)$ and $h(\xi_{t_n}) \in h(K_n)$ $(n\in\N)$, the continuity of $h$ is yielded.
But $h$ is a bijective continuous map of the compact Hausdorff space $\T$, thus we conclude that $h$ is a homeomorphism.
This completes the proof for the first case.

For the second case, we obtain in a similar way as above that there is a homeomorphism $h\colon\T\to\T$ such that \eqref{hKac} is satisfied.
Let $K\subseteq\T$ be a compact set with $\lambda(K)>0$, and $\T\setminus K = \cup_{j=1}^n U_j$ ($n\in\N\cup\{\infty\}$) where the union is disjoint and $U_j\in \irA^{op}$ for every $j$.
We intend to show that $\lambda(h(K)) > 0$ is satisfied.
Obviously, there exists a $\mu\in P_{ac}(\T)$ with $S_\mu = K$.
By \eqref{bekebelez0} and \eqref{hKac} we infer
$$
h(K) = h\left(\bigcap_{j=1}^n (\T\setminus U_j)\right) = \bigcap_{j=1}^n h(\T\setminus U_j) = \bigcap_{j=1}^n \eta_\psi(\T\setminus U_j) \supseteq S_{\psi(\mu)}.
$$
Thus we obtain $0 < \lambda(S_{\psi(\mu)}) \leq \lambda(h(K))$.
Next, if $A\in\irB_\T$ has positive Lebesgue measure, then by regurality we infer that there is a compact subset $K\subseteq A$ such that $K$ has still positive Lebesgue measure.
Therefore we conclude 
$$
\lambda(h(A)) = 0 \;\Longrightarrow\; \lambda(A) = 0 \quad (A\in\irB_\T).
$$
For the reverse direction we only have to observe the following property which we can conclude from \eqref{hKac}:
$$
S_{\psi^{-1}(\mu)} = \eta_{\psi^{-1}}(S_\mu) = \eta_{\psi}^{-1}(S_\mu) = h^{-1}(S_{\mu}) \qquad (\mu \in P_{ac}^{cs}(\T)).
$$
This completes the proof.
\end{proof}

We proceed with the verification of the next lemma where measures on $\R$ are considered.

\begin{lem}\label{inf_lem}
Let $I\in\irI$ be a closed (possibly unbounded) interval, and $\mu\in P_c(\R)$.
We have the following equation:
\begin{equation}\label{inf_eq}
1 - \mu(I) = \inf\{d_{Ku}(\mu,\vartheta) \colon \vartheta\in P_c(\R), S_\vartheta \subseteq I\}.
\end{equation}
Furthermore, if $\mu\in P_{ac}(\R)$, then we have 
\begin{equation}\label{infac_eq}
1 - \mu(I) = \inf\{d_{Ku}(\mu,\vartheta) \colon \vartheta\in P_{ac}(\R), S_\vartheta \subseteq I\}.
\end{equation}
\end{lem}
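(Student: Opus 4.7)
The plan is to prove both inequalities, showing that the infimum in \eqref{inf_eq} is actually attained; the absolutely continuous case will follow with no extra work because the approximating measure we construct lies in $P_{ac}(\R)$ whenever $\mu$ does.

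For the lower bound, write the endpoints of $I$ as $a \in [-\infty,\infty)$ and $b \in (-\infty,\infty]$, so $I$ equals $[a,b]$, $[a,\infty)$, $(-\infty,b]$, or $\R$. Any $\vartheta \in P_c(\R)$ with $S_\vartheta \subseteq I$ satisfies $f_\vartheta(t) = 0$ for $t < a$ and $f_\vartheta(t) = 1$ for $t \geq b$. Using the first equality in \eqref{Kuiper-def_eq}, the left-hand limit at $a$ in the first supremum and the right-hand limit at $b$ in the second supremum give
$$
\sup_{t\in\R}(f_\mu(t) - f_\vartheta(t)) \geq \mu((-\infty,a)) \quad\text{and}\quad \sup_{t\in\R}(f_\vartheta(t) - f_\mu(t)) \geq \mu((b,\infty)),
$$
where the respective term is understood to be $0$ when $a = -\infty$ or $b = \infty$. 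Summing, and using that $\mu$ is continuous (so $\mu(\{a\}) = \mu(\{b\}) = 0$), gives $d_{Ku}(\mu,\vartheta) \geq \mu((-\infty,a)) + \mu((b,\infty)) = 1 - \mu(I)$.

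For the matching upper bound, I construct an explicit $\vartheta$ that attains the value $1-\mu(I)$. Set $\alpha_1 = \mu((-\infty,a))$ and $\alpha_2 = \mu((b,\infty))$ (interpreted as $0$ at an infinite endpoint). Choose $\varepsilon > 0$ small enough that $[a,a+\varepsilon]$ and $[b-\varepsilon,b]$ lie inside $I$ and are disjoint, and let $u_a^\varepsilon$, $u_b^\varepsilon$ denote the uniform probability measures on these two subintervals. Define
$$
\vartheta(B) := \mu(B \cap I) + \alpha_1 \cdot u_a^\varepsilon(B) + \alpha_2 \cdot u_b^\varepsilon(B) \quad (B \in \irB_\R),
$$
omitting the corresponding correction term when an endpoint of $I$ is infinite. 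Then $\vartheta \in P_c(\R)$, $S_\vartheta \subseteq I$, and $\vartheta \in P_{ac}(\R)$ whenever $\mu \in P_{ac}(\R)$ (since a sum of absolutely continuous measures is absolutely continuous). A direct comparison of $f_\mu$ and $f_\vartheta$ on the four natural subregions $(-\infty,a]$, $[a,a+\varepsilon]$, $[a+\varepsilon,b-\varepsilon]$ and $[b-\varepsilon,\infty)$ shows that $f_\mu - f_\vartheta$ attains its maximum $\alpha_1$ at $t = a$ and its minimum $-\alpha_2$ at $t = b$, and vanishes on $[a+\varepsilon,b-\varepsilon]$; hence $d_{Ku}(\mu,\vartheta) = \alpha_1 + \alpha_2 = 1 - \mu(I)$.

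There is no real obstacle; the only mildly fiddly part is the bookkeeping when $a$ or $b$ is infinite and the verification of the sup computation for the chosen $\vartheta$, which amounts to a routine case analysis on the distribution functions. The $P_{ac}$ statement \eqref{infac_eq} is obtained from exactly the same argument, observing that the lower-bound proof is insensitive to the ambient class and that the constructed $\vartheta$ automatically inherits absolute continuity from $\mu$.
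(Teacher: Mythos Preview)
Your proof is correct. The lower bound via the two-supremum definition of $d_{Ku}$ works, and your explicit $\vartheta$ (restriction of $\mu$ to $I$ plus uniform corrections near the finite endpoints) does achieve $d_{Ku}(\mu,\vartheta)=1-\mu(I)$; the distribution-function check you outline is routine and goes through in all endpoint cases. The absolute-continuity claim is immediate because uniform measures are absolutely continuous.

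The paper argues slightly differently. For the lower bound it simply plugs the interval $I$ itself into the $\sup$ over intervals, giving $d_{Ku}(\mu,\vartheta)\ge|\mu(I)-\vartheta(I)|=1-\mu(I)$ in one line. For the upper bound it uses the \emph{conditional} measure $\nu(A)=\mu(A\cap I)/\mu(I)$ (after disposing of the trivial case $\mu(I)=0$) and shows $d_{Ku}(\mu,\nu)=1-\mu(I)$ by a short case split on whether $\mu(\widetilde I)-\nu(\widetilde I)$ is non-positive or positive. Compared with your construction, the paper's $\nu$ is $\varepsilon$-free and the interval-based estimate is a bit cleaner, at the cost of singling out $\mu(I)=0$; your construction handles all cases uniformly and makes the passage to $P_{ac}(\R)$ explicit, at the cost of the endpoint bookkeeping. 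Either approach proves the lemma with comparable effort.
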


\begin{proof}
We will only deal with \eqref{inf_eq}, since \eqref{infac_eq} can be handled in a similar way.
We have 
$$
d_{Ku}(\mu,\vartheta) \geq |\mu(I)-\vartheta(I)| = 1 - \mu(I) \quad (\vartheta\in P_c(R),\, S_\vartheta \subseteq I),
$$
which implies $1 - \mu(I) \leq \inf\{d_{Ku}(\mu,\vartheta) \colon \vartheta\in P_c(R), S_\vartheta \subseteq I\}$.
If $\mu(I) = 0$, then we immediately obtain \eqref{inf_eq}, thus in the sequel we may assume that we have $\mu(I) > 0$.

Let us define $\nu \in P(\R)$ by
$$
\nu(A) = \frac{\mu(A\cap I)}{\mu(I)} \quad (A\in \irB_\R).
$$
Clearly, we have $\nu \in P_c(\R)$ and $S_\nu \subseteq I$, moreover 
\begin{equation}\label{Inelszukebb}
|\nu(J) - \mu(J)| = \nu(J) - \mu(J) = \frac{\mu(J)}{\mu(I)} (1 - \mu(I)) \leq 1 - \mu(I) \quad (J\in\irI,\, J\subseteq I).
\end{equation}
Now, we consider an arbitrary interval $\widetilde I\in\irI$.
There are two possibilities: either $\mu(\widetilde I) -\nu(\widetilde I) \leq 0$, or $\mu(\widetilde I) -\nu(\widetilde I) > 0$.
In case of the first one, we have the following estimation:
$$
\left|\mu(\widetilde I) -\nu(\widetilde I)\right| = \nu(\widetilde I) -\mu(\widetilde I) \leq \nu(I \cap \widetilde I) -\mu(I \cap \widetilde I) \leq 1-\mu(I),
$$
where we used \eqref{Inelszukebb}. 
For the second possibility, $\widetilde I \subseteq I$ is impossible, thus, again by \eqref{Inelszukebb}, we estimate in the following way:
$$
\left|\mu(\widetilde I) -\nu(\widetilde I)\right| 
= \mu(\widetilde I) -\nu(\widetilde I) 
= \nu(\R \setminus \widetilde I) -\mu(\R \setminus \widetilde I) 
= \nu\left((\R \setminus \widetilde I)\cap I\right) -\mu(\R \setminus \widetilde I) 
$$
$$
\leq \nu\left((\R \setminus \widetilde I)\cap I\right) -\mu\left((\R \setminus \widetilde I)\cap I\right) 
\leq 1-\mu(I),
$$
where we observed that $(\R \setminus \widetilde I)\cap I$ is an interval.
Since $1-\mu(I) = \nu(I) - \mu(I)$, we conclude $d_{Ku}(\mu,\nu) = 1-\mu(I)$.
Therefore \eqref{inf_eq} follows, which ends the proof.
\end{proof}

For an arbitrary $\mu\in P_c(\R)$ we define the measure $\mu\circ\tau^{-1}$ by $(\mu\circ\tau^{-1})(A) = \mu(\tau^{-1}(A))$ $(A\in\irB_\T)$, where $\tau^{-1}(A)$ is the inverse image of $A$.
It is straightforward that the transformation $\mu \mapsto \mu\circ\tau^{-1}$ is a bijection between $P_c(\R)$ and $P_c(\T)$.
Furthermore, $\mu$ is absolutely continuous if and only if $\mu\circ\tau^{-1} \in P_{ac}(\T)$.
We define in a very similar way the measure $\widetilde\mu\circ h \in P_c(\T)$ where $\widetilde\mu \in P_c(\T)$ and $h\colon\T\to\T$ is a homeomorphism.
Let us note that in case when $\widetilde\mu \in P_{ac}(\T)$, then $\widetilde\mu\circ h$ is not necessarily absolutely continuous.

Now, we are in the position to prove our first theorem.

\begin{proof}[Proof of Theorem \ref{main-c_thm}]
Let 
$$
\psi\colon P_c(\T)\to P_c(\T), \quad \psi(\widetilde\mu) = \phi(\widetilde\mu\circ\tau)\circ\tau^{-1},
$$ 
It is clear that $\phi$ is a surjective isometry with respect to the Kuiper metric if and only if $\psi$ is a surjective isometry with respect to the metric $d$.

By Lemma \ref{homeomorphism_lem}, there exists a homeomorphism $h\colon\T\to\T$ such that 
$$
S_{\psi(\widetilde\mu)} = h(S_{\widetilde\mu}) \quad (\widetilde\mu\in P_c^{cs}(\T)).
$$
Let us consider the following mapping:
$$
\psi_1\colon P_c(\T)\to P_c(\T), \quad \psi_1(\widetilde\mu) = (\psi(\widetilde\mu))\circ h.
$$
It is straightforward to see that $\psi_1$ is a surjective isometry if and only if $\psi$ is, and that we have 
$$
S_{\psi_1(\widetilde\mu)} = h^{-1}(S_{\psi(\widetilde\mu)}) = S_{\widetilde\mu} \quad (\widetilde\mu\in P_c^{cs}(\T)).
$$
We define the transformation
$$
\phi_1\colon P_c(\R)\to P_c(\R), \quad \phi_1(\mu) = \psi_1(\mu\circ\tau^{-1})\circ\tau.
$$
It is apparent that $\phi_1$ is a surjective isometry if and only if $\phi$ is, and that we have 
\begin{equation}\label{phi1suppegyenlo}
S_{\phi_1(\mu)} = S_{\mu} \quad (\mu\in P_c^{cs}(\R)),
\end{equation}
where 
$$
P_c^{cs}(\R) = \{\mu\in P_c(\R) \colon S_\mu\in\irI \text{ or } \R\setminus S_\mu\in\irI\}.
$$
The following equivalence follows easily from the definition of $\phi_1$ and the property \eqref{bekebelez0} for $\psi_1$:
\begin{equation}\label{tartalmaz2}
S_\mu \subseteq S_\nu \;\iff\; S_{\phi_1(\mu)} \subseteq S_{\phi_1(\nu)} \quad (\mu,\nu\in P_c(\R)).
\end{equation}

Now, let $\mu$ be an arbitrary continuous Borel probability measure on $\R$.
Since we have $\psi_1(P_c^{cs}(\T)) = P_c^{cs}(\T)$, we infer $\phi_1(P_c^{cs}(\R)) = P_c^{cs}(\R)$.
This, \eqref{phi1suppegyenlo}, \eqref{tartalmaz2} and Lemma \ref{inf_lem} implies the following for every closed interval $I\in\irI$:
$$
\phi_1(\mu)(I) = 1 - \inf\{d_{Ku}(\phi_1(\mu),\vartheta) \colon \vartheta\in P_c(\R), S_\vartheta \subseteq I\} 
$$
$$
= 1 - \inf\{d_{Ku}(\mu,\phi_1^{-1}(\vartheta)) \colon \vartheta\in P_c(\R), S_\vartheta \subseteq I\} 
$$
$$
= 1 - \inf\{d_{Ku}(\mu,\vartheta) \colon \vartheta\in P_c(\R), S_\vartheta \subseteq I\} = \mu(I).
$$
But this immediately implies $\phi_1(\mu) = \mu$, and thus that $\phi_1$ is the identity map.
It is tedious, but straightforward, to check that transforming back to our original map $\phi$ yields \eqref{phiformc} where
$$
x = \left\{
\begin{matrix}
\infty & \text{if } h(-1) = -1\\
\tau^{-1}(h(-1)) & \text{otherwise}
\end{matrix}
\right.
$$
and $g$ is the continuous extension of $\tau^{-1}\circ h^{-1}\circ \tau \circ r_x^{-1}$. 
Note that the latter function is not defined in at most two points of $\R$ (depending on the actual value of $x$ and the function $h$). 
However, if we consider $r_x$ as a bijective function of the one-point compactification of $\R$ (which is topologically equivalent to $\T$), then it is not hard to see that $g$ (as the continuous extension of $\tau^{-1}\circ h^{-1}\circ \tau \circ r_x^{-1}$) makes sense and that it is indeed a homeomorphism of $\R$.
\end{proof}

The proof of our second result is quite similar to the above one, therefore we only present its sketch.

\begin{proof}[Proof of Theorem \ref{main-ac_thm}]
The definitions of $\psi$, $\psi_1$ and $\phi_1$ are similar to the above definitions.
It is straightforward that each of these transformations is a surjective Kuiper isometry of $P_{ac}(\T)$ if and only if $\phi$ is.
While transforming back the identity map (i.e. $\phi_1$) to $\phi$ we observe that the homeomorphism $g$ from the statement has the property
$$
m(B) = 0 \;\iff\; m(g(B)) = 0 \quad (B\in\irB_\R).
$$
Since every homeomorphism of $\R$ is either monotone increasing or decreasing, we infer that $g$ and $g^{-1}$ are of bounded variation on every compact interval.
Therefore the famous Banach--Zarecki theorem (see e.g. \cite{LuzinN1,LuzinN2}) implies that they are both locally absolutely continuous functions.
This completes one direction of the statement.
The other direction is a rather easy calculation.
\end{proof}


\section{Proof in the general case} \label{gen_thm_sec}

In order to prove our last theorem, we need to introduce a new type of support for Borel probability measures.
Let $\mu \in P(\R)$ be an arbitrary measure, and let us define the following sets
$$
N_\mu = \cup\{I\in\irI\colon \mu(I) = 0\}, \quad C_\mu = \R\setminus N_\mu.
$$
The set $C_\mu$ will be called the co-interval support of $\mu$.
Clearly, this is the unique smallest set such that its complement is a union of non-degenerate intervals and $\mu$ is concentrated on it.
This is an analogue of the usual closed support $S_\mu$, however, as we shall see this notion is more useful here.
Recall that $S_\mu$ is the complement of the union of open intervals with zero $\mu$-measure.
Therefore the following properties of the co-interval support are straightforward: 
$$
C_\mu \subseteq S_\mu, \qquad \overline{C_\mu} = S_\mu \quad \text{and} \quad \card(S_\mu \setminus C_\mu) \leq \aleph_0.
$$
If $\irM \subseteq P(\R)$, then let
$$
\irM^1 = \{\nu \in P(\R) \colon d_{Ku}(\mu,\nu) = 1 \;\; \forall \, \mu\in\irM \}.
$$
We call a probability measure which is concentrated on a point $x\in\R$ a Dirac measure, and we will denote it by $\delta_x$.

We begin with the following metric characterisation of Dirac measures.

\begin{lem}\label{Dirac_lem}
For an arbitrary $\mu \in P(\R)$ we have $\card\big((\{\mu\}^1)^1\big) = 1$ if and only if $\mu$ is a Dirac measure.
\end{lem}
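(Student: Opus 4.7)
The plan rests on the explicit formula $d_{Ku}(\delta_z,\mu) = 1 - \mu(\{z\})$, which follows from Lemma \ref{Kuiper-c-max_lem} by observing that the maximum of $|\delta_z(I) - \mu(I)|$ over $I \in \irI_0$ is attained at $I = \{z\}$. From this I would read off $\{\delta_x\}^1 = \{\nu \in P(\R) \colon \nu(\{x\}) = 0\}$, and more generally $\delta_z \in \{\mu\}^1$ exactly when $\mu(\{z\}) = 0$. I will also use the trivial fact that $\mu \in (\{\mu\}^1)^1$ for every $\mu \in P(\R)$, since each $\nu \in \{\mu\}^1$ satisfies $d_{Ku}(\mu,\nu) = 1$ by definition.

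For the direction ``$\mu = \delta_x$ implies $\card((\{\mu\}^1)^1) = 1$'', I would establish $(\{\delta_x\}^1)^1 \subseteq \{\delta_x\}$. Take $\rho \in (\{\delta_x\}^1)^1$. Because $\delta_y \in \{\delta_x\}^1$ for every $y \neq x$, the relation $d_{Ku}(\rho,\delta_y) = 1 - \rho(\{y\}) = 1$ forces $\rho$ to have no atoms outside $\{x\}$; hence $\rho = \alpha\delta_x + (1-\alpha)\rho_c$ for some $\rho_c \in P_c(\R)$ and $\alpha \in [0,1]$. If $\alpha < 1$, then $\rho_c \in \{\delta_x\}^1$ (as it is continuous, so $\rho_c(\{x\}) = 0$), while the identity $|\rho(I)-\rho_c(I)| = \alpha |\delta_x(I) - \rho_c(I)|$ holds for every $I \in \irI_0$, and Lemma \ref{Kuiper-c-max_lem} then delivers $d_{Ku}(\rho,\rho_c) = \alpha \cdot d_{Ku}(\delta_x,\rho_c) = \alpha < 1$, contradicting $\rho \in (\{\delta_x\}^1)^1$. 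So $\alpha = 1$ and $\rho = \delta_x$, as required.

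For the converse, assume $\mu$ is not a Dirac; the goal is to exhibit a second element of $(\{\mu\}^1)^1$ besides $\mu$. Split according to whether $\mu$ has an atom. If $x_0$ is any atom of $\mu$, my candidate is $\delta_{x_0}$, and I would verify $\delta_{x_0} \in (\{\mu\}^1)^1$ as follows: for $\nu \in \{\mu\}^1$ witnessed by $I \in \irI_0$, either $\mu(I) = 0$ (which forces $x_0 \notin I$ because $\mu(\{x_0\}) > 0$, whence $\nu(\{x_0\}) \leq \nu(\R\setminus I) = 0$) or $\mu(I) = 1$ (which forces $x_0 \in I$, whence $\nu(\{x_0\}) \leq \nu(I) = 0$); in both subcases $d_{Ku}(\delta_{x_0},\nu) = 1$. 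Since $\mu$ is not a Dirac, $\mu \neq \delta_{x_0}$. If instead $\mu$ has no atoms, then $\mu \in P_c(\R)$; the intermediate value theorem applied to the continuous function $f_\mu$ yields a Borel set $A$ with $0 < \mu(A) < 1$, and I define $\rho \in P_c(\R)$ by $\rho(B) := \mu(A \cap B)/\mu(A)$. Then $\rho \neq \mu$ (since $\rho(A) = 1 > \mu(A)$) and $\rho$ is absolutely continuous with respect to $\mu$; for any $\nu \in \{\mu\}^1$ with witnessing interval $I$, absolute continuity transports $\mu(I) \in \{0,1\}$ to $\rho(I) \in \{0,1\}$ with the same value, so $d_{Ku}(\rho,\nu) = 1$ and $\rho \in (\{\mu\}^1)^1$.

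The main technical obstacle I anticipate is the Kuiper computation $d_{Ku}(\alpha\delta_x + (1-\alpha)\rho_c,\rho_c) = \alpha$ in the first direction. The factorisation $|\rho(I)-\rho_c(I)| = \alpha|\delta_x(I)-\rho_c(I)|$ is immediate, but one still has to pull the scalar $\alpha$ out of the supremum; Lemma \ref{Kuiper-c-max_lem} (replacing the two-supremum definition \eqref{Kuiper-def_eq} with a single maximum over $\irI_0$) makes this effortless, whereas from the original definition one would have to argue the factorisation separately inside each of the two suprema. The remaining steps -- the identification $\{\delta_z\}^1 = \{\nu \colon \nu(\{z\}) = 0\}$ and the transport of witnessing intervals under absolute continuity -- are routine bookkeeping.
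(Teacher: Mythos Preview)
Your argument is correct. The route differs from the paper's in two places worth noting.

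For the direction ``$\mu$ not Dirac $\Rightarrow$ $\card((\{\mu\}^1)^1)>1$'', the paper proves in one stroke that \emph{every} $\nu\in P(\R)$ absolutely continuous with respect to $\mu$ lies in $(\{\mu\}^1)^1$, and then observes that a non-Dirac $\mu$ admits infinitely many such $\nu$. Your case split is therefore unnecessary: in your atomic case, $\delta_{x_0}$ is itself absolutely continuous with respect to $\mu$, so it falls under the paper's single argument. Your split does, however, produce an explicit second element ($\delta_{x_0}$) without invoking a conditional measure.

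For the direction ``$(\{\delta_x\}^1)^1=\{\delta_x\}$'', the paper again runs the absolute-continuity idea but in reverse: assuming $\nu(\R\setminus\{x\})>0$, it conditions $\nu$ on $\R\setminus\{x\}$ to obtain $\vartheta\in\{\delta_x\}^1$ absolutely continuous with respect to $\nu$, and then notes that no witnessing interval for $d_{Ku}(\nu,\vartheta)=1$ can exist. Your approach instead decomposes $\rho=\alpha\delta_x+(1-\alpha)\rho_c$ and computes $d_{Ku}(\rho,\rho_c)=\alpha$ explicitly via the scaling identity and Lemma~\ref{Kuiper-c-max_lem}. Both arguments are short; yours leans more on the formula $d_{Ku}(\delta_z,\mu)=1-\mu(\{z\})$ (which the paper defers to the proof of Lemma~\ref{Dirac-move_lem}), while the paper's uses a single absolute-continuity mechanism uniformly across both directions.
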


\begin{proof}
Let $\nu\in P(\R)$ be an arbitrary measure which is absolutely continuous with respect to $\mu$. 
We show that $\nu \in (\{\mu\}^1)^1$.
By Lemma \ref{Kuiper-c-max_lem}, for any $\vartheta \in \{\mu\}^1$ there exists an $I\in\irI_0$ such that either we have $\mu(I) = 0$ and $\vartheta(I) = 1$, or $\mu(\R\setminus I) = 0$ and $\vartheta(\R\setminus I) = 1$.
Since $\nu$ is absolutely continuous with respect to $\mu$, we immediately infer either $\nu(I) = 0$ and $\vartheta(I) = 1$, or $\nu(\R\setminus I) = 0$ and $\vartheta(\R\setminus I) = 1$, which implies $d_{Ku}(\vartheta,\nu) = 1$.
Since this holds for every $\vartheta \in \{\mu\}^1$, the relation $\nu \in (\{\mu\}^1)^1$ follows.

Now, if $\mu$ is not a Dirac measure, then it is straigthforward that there are infinitely many $\nu \in P(\R)$ which is abolutely contionus with respect to $\mu$.
Hence, we obtain that $\card\big((\{\mu\}^1)^1\big) = 1$ implies that $\mu$ is a Dirac measure.

Finally, we show that $(\{\delta_x\}^1)^1 = \{\delta_x\}$.
Let $\delta_x \in P(\R)$ be a Dirac measure with $x\in\R$.
By Lemma \ref{Kuiper-c-max_lem}, we have $\vartheta \in \{\delta_x\}^1$ if and only if there exists a possibly degenerate interval $I\in\irI_0$ such that either $\delta_x(I) = 0$ and $\vartheta(I) = 1$, or $\delta_x(\R\setminus I) = 0$ and $\vartheta(\R\setminus I) = 1$.
Obviously, in the first case we have $x\in \R\setminus I$, and in the second one $x\in I$.
Therefore we infer the equation
$$
\{\delta_x\}^1 = \{\vartheta\in P(\R) \colon \vartheta(\{x\}) = 0\}.
$$
Assume that $\nu \in (\{\delta_x\}^1)^1$, i.e. $d_{Ku}(\nu,\vartheta) = 1$ for every $\vartheta\in P(\R)$, $\vartheta(\{x\}) = 0$.
Let us suppose for a moment that $\nu(\R\setminus\{x\}) > 0$, and let $\vartheta$ be defined by 
$$
\vartheta(B) = \tfrac{1}{\nu(\R\setminus\{x\})} \cdot \nu(B\setminus\{x\}) \quad (B\in\irB_\R).
$$
It is clear that $\vartheta \in \{\delta_x\}^1$ and that $\vartheta$ is absolutely continuous with respect to $\nu$.
Thus $\nu(I) = 0$ and $\vartheta(I) = 1$ cannot be satisfied simultaneously when $I\in\irI_0$, and the same holds for $\nu(\R\setminus I) = 0$ and $\vartheta(\R\setminus I) = 1$.
This implies $d_{Ku}(\nu,\vartheta) < 1$, a contradiction.
Hence $\nu(\R\setminus\{x\}) = 0$ follows, and we obtain $(\{\delta_x\}^1)^1 = \{\delta_x\}$, which completes the proof.
\end{proof}

We proceed with verifying the following property of surjective Kuiper isometries.

\begin{lem}\label{Dirac-move_lem}
Let $\phi\colon P(\R) \to P(\R)$ be a surjective isometry with respect to the Kuiper distance.
Then there exists a bijection $f \colon \R \to \R$ such that
$$
\phi(\mu)(\{f(x)\}) = \mu(\{x\}) \quad (\mu\in P(\R),\; x\in\R).
$$
\end{lem}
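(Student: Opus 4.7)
The plan is to exploit the purely metric characterisation of Dirac measures established in Lemma \ref{Dirac_lem}, together with a short computation showing that $d_{Ku}(\mu,\delta_x)$ encodes precisely the point-mass $\mu(\{x\})$.

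First, since $\phi$ is a surjective isometry, it is automatically a bijection, and the condition $\card((\{\mu\}^1)^1) = 1$ is defined using only the metric $d_{Ku}$. Applying $\phi$ to this condition and invoking Lemma \ref{Dirac_lem} in both directions yields that $\phi$ maps the set of Dirac measures bijectively onto itself. This lets me define a bijection $f\colon\R\to\R$ by the rule
\[
\phi(\delta_x) = \delta_{f(x)} \qquad (x\in\R).
\]

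Next, I compute $d_{Ku}(\mu,\delta_x)$ for an arbitrary $\mu\in P(\R)$ and $x\in\R$. Using the distribution-function formulation from \eqref{Kuiper-def_eq}, one has $f_{\delta_x}(t) = 0$ for $t<x$ and $f_{\delta_x}(t) = 1$ for $t\ge x$, so
\[
\sup_{t\in\R}\bigl(f_\mu(t)-f_{\delta_x}(t)\bigr) = \mu\bigl((-\infty,x)\bigr),\qquad \sup_{t\in\R}\bigl(f_{\delta_x}(t)-f_\mu(t)\bigr) = \mu\bigl((x,\infty)\bigr),
\]
and therefore
\[
d_{Ku}(\mu,\delta_x) = \mu\bigl((-\infty,x)\bigr) + \mu\bigl((x,\infty)\bigr) = 1 - \mu(\{x\}).
\]
(Alternatively, one can read this off directly from Lemma \ref{Kuiper-c-max_lem} applied to the degenerate interval $I=\{x\}$ together with the interval $(-\infty,x)$ or $(x,\infty)$.)

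Finally, combining the isometry property of $\phi$ with the formula just derived gives, for every $\mu\in P(\R)$ and every $x\in\R$,
\[
\phi(\mu)(\{f(x)\}) = 1 - d_{Ku}\bigl(\phi(\mu),\delta_{f(x)}\bigr) = 1 - d_{Ku}\bigl(\phi(\mu),\phi(\delta_x)\bigr) = 1 - d_{Ku}(\mu,\delta_x) = \mu(\{x\}),
\]
which is the conclusion. There is no serious obstacle here: the only subtlety is the preservation of Dirac measures, and Lemma \ref{Dirac_lem} already reduces this to a formal check since $\{\cdot\}^1$ is defined entirely from the metric.
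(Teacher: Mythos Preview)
Your proof is correct and follows essentially the same route as the paper: invoke Lemma~\ref{Dirac_lem} to obtain the bijection $f$ via $\phi(\delta_x)=\delta_{f(x)}$, establish the identity $d_{Ku}(\mu,\delta_x)=1-\mu(\{x\})$, and then read off the conclusion from the isometry property. The only cosmetic difference is that the paper derives $d_{Ku}(\mu,\delta_x)=1-\mu(\{x\})$ using the interval formulation of Lemma~\ref{Kuiper-c-max_lem} (splitting into the cases $x\in I$ and $x\notin I$), whereas you compute it directly from the distribution-function definition \eqref{Kuiper-def_eq}; both arguments are equally short and valid.
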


\begin{proof}
Let $\Delta$ denote the set of all Dirac measures of $P(\R)$.
Since $\phi\big((\{\mu\}^1)^1\big) = \big(\phi(\{\mu\}^1)\big)^1 = \big(\{\phi(\mu)\}^1\big)^1$ holds, by Lemma \ref{Dirac_lem} we obtain $\phi(\Delta) = \Delta$, i.e. we have a bijection $f\colon\R\to\R$ such that 
$$
\phi(\delta_x) = \delta_{f(x)} \qquad (x\in\R).
$$

Let $\mu\in P(\R)$ be an arbitrary measure.
We state that 
\begin{equation}\label{Kudist-dx_eq}
d_{Ku}(\mu,\delta_x) = 1-\mu(\{x\}) \qquad (x\in\R).
\end{equation}
On one hand, since $\{x\} \in \irI_0$ and $|\delta_x(\{x\})-\mu(\{x\})| = 1-\mu(\{x\})$, we obtain $d_{Ku}(\mu,\delta_x) \geq 1-\mu(\{x\})$.
On the other hand, let $I\in\irI_0$ be an arbitrary, possibly degenerate, interval.
If $x\in I$, then we have $|\delta_x(I)-\mu(I)| = 1-\mu(I) \leq 1-\mu(\{x\})$; and if $x\notin I$, then we have $|\delta_x(I)-\mu(I)| = \mu(I) \leq \mu(\R\setminus\{x\}) = 1-\mu(\{x\})$.
Therefore we conclude \eqref{Kudist-dx_eq}.

Finally, observing the following for every $x\in\R$ and $\mu\in P(\R)$ completes the proof:
$$
\mu(\{x\}) = 1 - d_{Ku}(\mu,\delta_x) = 1 - d_{Ku}(\phi(\mu),\phi(\delta_x)) = 1 - d_{Ku}(\phi(\mu),\delta_{f(x)}) = \phi(\mu)(\{f(x)\}).
$$
\end{proof}

It is straightforward that for every $\mu\in P(\R)\setminus\Delta$ we can write $N_\mu$ as a disjoint union $N_\mu = (\R\setminus\conv(C_\mu))\cup(\cup_{j=1}^n I_j)$ where $n\in\N\cup\{0,\infty\}$, $I_j\in\irI$ for every $j$, and $\conv(\cdot)$ denotes the convex hull of a given set.
In fact, the $I_j$'s are the bounded connected components of $N_\mu$, and in case when there is at least one unbounded connected component, then the union of these components is exactly $\R\setminus\conv(C_\mu)$.
We note that the intersection of the closure of two connected components can be empty or one point.

Now, we are ready to prove an analogue of Lemma \ref{mu-c-char_lem}, where we describe $\{\mu\}^1\setminus\Delta$ instead of $\{\mu\}^1$.
The reason for this is that the presence of Dirac measures would cause some technical problems.

\begin{lem}\label{mu-1-char_lem}
Let $\mu\in P(\R)\setminus\Delta$ and let us write $N_\mu = (\R\setminus\conv(C_\mu))\cup(\cup_{j=1}^n I_j)$ where the right-hand side is the above mentioned disjoint union.
Then we have
\begin{equation}\label{mu-1_eq}
\{\mu\}^1\setminus\Delta = \big\{ \nu\in P(\R)\setminus\Delta \colon \nu(\R\setminus\conv(C_\mu)) = 1 \big\} \bigcup \left( \bigcup_{j=1}^n \{ \nu\in P(\R)\setminus\Delta \colon \nu(I_j) = 1 \} \right).
\end{equation}
\end{lem}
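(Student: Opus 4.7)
The plan is to prove both inclusions of \eqref{mu-1_eq} directly using the maximum formulation of $d_{Ku}$ from Lemma~\ref{Kuiper-c-max_lem}, together with the fact that $\mu$ is concentrated on $C_\mu$ (so $\mu(N_\mu) = 0$) and the explicit decomposition of $N_\mu$ recorded immediately before the lemma statement.

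For the inclusion $\supseteq$: if $\nu(I_j) = 1$ for some bounded component $I_j$ of $N_\mu$, then $\mu(I_j) \leq \mu(N_\mu) = 0$, so the interval $I_j \in \irI_0$ already yields $|\mu(I_j) - \nu(I_j)| = 1$, forcing $d_{Ku}(\mu,\nu) = 1$. If instead $\nu(\R\setminus\conv(C_\mu)) = 1$, then the interval $\conv(C_\mu) \in \irI_0$ serves as a witness, since $\mu(\conv(C_\mu)) \geq \mu(C_\mu) = 1$ while $\nu(\conv(C_\mu)) = 0$.

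For the inclusion $\subseteq$: given $\nu \in \{\mu\}^1 \setminus \Delta$, Lemma~\ref{Kuiper-c-max_lem} produces $I \in \irI_0$ with $|\mu(I) - \nu(I)| = 1$, so either (a)~$\mu(I) = 0,\,\nu(I) = 1$ or (b)~$\mu(I) = 1,\,\nu(I) = 0$. The degenerate possibility $I = \{x\}$ would force $\nu = \delta_x$ in case~(a) or $\mu = \delta_x$ in case~(b), both excluded by the Dirac assumptions; hence $I \in \irI$. In case~(a), $I \subseteq N_\mu$ is a connected subset and therefore lies in a unique connected component $J$ of $N_\mu$ with $\nu(J) \geq \nu(I) = 1$: if $J$ is bounded then $J = I_j$ for some $j$, and if $J$ is unbounded then $J \subseteq \R\setminus\conv(C_\mu)$ by the pre-lemma decomposition, so in either case $\nu$ belongs to the right-hand side of \eqref{mu-1_eq}. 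In case~(b), $\mu(\R\setminus I) = 0$ forces $C_\mu \subseteq I$, and convexity of $I$ yields $\conv(C_\mu) \subseteq I$; therefore $\R\setminus I \subseteq \R\setminus\conv(C_\mu)$ and $\nu(\R\setminus\conv(C_\mu)) \geq \nu(\R\setminus I) = 1$.

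The argument is essentially a routine case split; no serious technical obstacle arises. The only step requiring attention is eliminating the degenerate possibility $I = \{x\}$, which is exactly where the Dirac exclusion on both $\mu$ and $\nu$ enters. Everything else reduces mechanically to Lemma~\ref{Kuiper-c-max_lem} combined with the structural description of $N_\mu$ and $C_\mu$ (in particular, the identification of the union of unbounded components of $N_\mu$ with $\R\setminus\conv(C_\mu)$) stated immediately before the lemma.
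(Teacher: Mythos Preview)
Your proof is correct and follows essentially the same approach as the paper's. Both arguments invoke Lemma~\ref{Kuiper-c-max_lem}, exclude degenerate intervals via the Dirac hypothesis on $\mu$ and $\nu$, and then split according to whether $\mu(I)=0,\,\nu(I)=1$ or $\mu(I)=1,\,\nu(I)=0$; the only cosmetic difference is that you separate the two inclusions explicitly while the paper phrases the same case analysis as a chain of equivalences.
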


\begin{proof}
By Lemma \ref{Kuiper-c-max_lem} we have
$$
\{\mu\}^1\setminus\Delta = \{ \nu \in P(\R)\setminus\Delta \colon \; \exists \; I\in\irI_0 \; \nu(I) = 0 \text{ and } \mu(I) = 1\}
$$
$$
\cup \{ \nu \in P(\R)\setminus\Delta \colon \; \exists \; I\in\irI_0 \; \nu(I) = 1 \text{ and } \mu(I) = 0\}
$$
$$
= \{ \nu \in P(\R)\setminus\Delta \colon \; \exists \; I\in\irI \; \nu(I) = 0 \text{ and } \mu(I) = 1\} 
$$
$$
\cup \{ \nu \in P(\R)\setminus\Delta \colon \; \exists \; I\in\irI \; \nu(I) = 1 \text{ and } \mu(I) = 0\}.
$$
On one hand, if we have $\nu(I) = 0$ and $\mu(I) = 1$ with some $I\in\irI$, then we automatically infer $\conv(C_\mu) \subseteq I$.
Therefore this case is equivalent to $\nu(\R\setminus\conv(C_\mu)) = 1$.
On the other hand, if we have $\mu(I) = 0$ and $\nu(I) = 1$ with some $I\in\irI$, then $I\subseteq N_\mu$ is yielded, or equivalently $I\subseteq I_j$ with some $j$ or $I\subseteq \R\setminus\conv(C_\mu)$.
Therefore this case is equivalent to either $\nu(I_j) = 1$ for some $j$, or that the $\nu$-measure of one of the unbounded components of $N_\mu$ is 1.
Noting that this latter possibility implies $\nu(\R\setminus\conv(C_\mu)) = 1$ ends our proof.
\end{proof}

We continue with the following lemma.

\begin{lem}\label{f-hom_lem}
The function $f$ defined in Lemma \ref{Dirac-move_lem} is a homeomorphism (i.e. a monoton bijection).
\end{lem}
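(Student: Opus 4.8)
The plan is to show that $f$ preserves the betweenness relation of $\R$; since a bijection of $\R$ that preserves betweenness is strictly monotone, and a strictly monotone bijection of $\R$ onto $\R$ is automatically a homeomorphism, this finishes the proof. First I would record that $f$ controls the whole atomic structure: by Lemma \ref{Dirac-move_lem}, for any finitely supported $\mu=\sum_i a_i\delta_{x_i}$ with $\sum_i a_i=1$ we get $\phi(\mu)(\{f(x_i)\})=a_i$, and since these weights already sum to $1$ we obtain $\phi\big(\sum_i a_i\delta_{x_i}\big)=\sum_i a_i\delta_{f(x_i)}$; in particular $\phi$ acts on two– and three–point measures simply by relocating the atoms through $f$.

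Fix $x<z$ and put $\mu=\tfrac12\delta_x+\tfrac12\delta_z$, so that $N_\mu=\R\setminus\{x,z\}$ and $\conv(C_\mu)=[x,z]$. By Lemma \ref{mu-1-char_lem} the set $\{\mu\}^1\setminus\Delta$ is the disjoint union of the inner part $\{\nu:\nu((x,z))=1\}$ and the outer part $\{\nu:\nu(\R\setminus[x,z])=1\}$, and (arguing with convex combinations exactly as in Lemma \ref{mu-c-char_lem}) these are precisely its two connected components. Since $\phi$ is a homeomorphism preserving each $\{\cdot\}^1\setminus\Delta$, it maps this pair of components bijectively onto the two components attached to $\phi(\mu)=\tfrac12\delta_{f(x)}+\tfrac12\delta_{f(z)}$, namely the inner part living on the open interval $]f(x),f(z)[$ and the outer part living on its complement. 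The whole difficulty is to show that $\phi$ sends inner to inner.

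To detect which component is which metrically I would attach to any set of measures $M$ the set $\Lambda(M)=\{t\in\R:\exists\,\theta\in M,\ d_{Ku}(\theta,\delta_t)<1\}$. By \eqref{Kudist-dx_eq} we have $d_{Ku}(\theta,\delta_t)=1-\theta(\{t\})$, so $t\in\Lambda(M)$ exactly when some $\theta\in M$ has an atom at $t$; hence $\Lambda$ of the inner component equals $(x,z)$ while $\Lambda$ of the outer component equals $\R\setminus[x,z]$. Because $\phi(\delta_t)=\delta_{f(t)}$ and $\phi$ preserves $d_{Ku}$, one gets $\Lambda(\phi(M))=f(\Lambda(M))$. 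Thus the inner component is the one whose $\Lambda$ is a bounded interval and the outer the one whose $\Lambda$ is unbounded, and $\phi$ maps inner to inner precisely when $f$ carries the bounded set $(x,z)$ to a bounded set. Equivalently, $\phi$ must send measures of compact support to measures of compact support — this is exactly the phenomenon announced in the Introduction, and it is the main obstacle. I would establish it by passing to the one–point compactification $\R\cup\{\infty\}$ (homeomorphic to $\T$ via $\tau$): every real point carries a Dirac measure while the added point $\infty$ carries none, and by Lemma \ref{Dirac-move_lem} $\phi$ permutes the Diracs through the bijection $f\colon\R\to\R$; hence the distinguished point at infinity cannot be moved into or out of $\R$, which is what forbids the inversions $r_x$ in this setting and forces the outer (unbounded) component to be mapped to the outer component.

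Granting this, the conclusion is quick. If $y\in(x,z)$, choose a non–Dirac $\theta$ supported in $(x,z)$ with $\theta(\{y\})>0$ (for instance $\theta=\tfrac12\delta_y+\tfrac12\delta_{y'}$ with $y'\in(x,z)$, $y'\neq y$); then $\theta$ lies in the inner component for $\mu$, so $\phi(\theta)$ lies in the inner component for $\phi(\mu)$ and carries an atom at $f(y)$, whence $f(y)\in\,]f(x),f(z)[$. Applying the same argument to the surjective isometry $\phi^{-1}$, whose associated bijection is $f^{-1}$, gives the reverse implication, so $y$ lies strictly between $x$ and $z$ if and only if $f(y)$ lies strictly between $f(x)$ and $f(z)$. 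Therefore $f$ preserves betweenness, hence is strictly monotone, and being a monotone bijection of $\R$ onto $\R$ it is a homeomorphism, as claimed.
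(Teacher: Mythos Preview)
Your approach differs substantially from the paper's.  The paper proves \eqref{co-int-supp-ppres_eq} (co-interval supports are preserved), then uses Lemma~\ref{Dirac-move_lem} to see that $\phi$ restricts to a surjective Kuiper isometry of $P_c(\R)$ and invokes the already--established Theorem~\ref{main-c_thm} to obtain $\phi(\mu)=\mu\circ(g\circ r_x)$ on $P_c(\R)$; the finite-$x$ case is then ruled out by comparing co-interval supports of Diracs with those of shrinking continuous measures, and finally $g=f^{-1}$ is read off in the same way.  You instead try to prove directly that $f$ preserves betweenness via the inner/outer decomposition of $\{\tfrac12\delta_x+\tfrac12\delta_z\}^1\setminus\Delta$ and the atom-detector $\Lambda$.

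Your reduction is correct up to the point you yourself flag as ``the main obstacle'': showing that $\phi$ sends the inner component to the inner component (equivalently, that $f((x,z))$ is bounded).  The paragraph meant to settle this---``the distinguished point at infinity cannot be moved into or out of $\R$, which is what forbids the inversions $r_x$''---is not a proof.  Knowing only that $f$ is a bijection of $\R$ and permutes the Diracs says nothing about where $\phi$ sends \emph{non}-Dirac measures, and in particular does not preclude $\phi$ from swapping the two components.  One way to actually close the gap within your framework: if some pair $(x,z)$ were ``inverted'', then for any $x'<x<z<z'$ the containment $\phi(\mathrm{inner}(x,z))\subset\phi(\mathrm{inner}(x',z'))$ forces $(x',z')$ to be inverted too (an unbounded set cannot sit inside a bounded one), and then $[f(x')\wedge f(z'),\,f(x')\vee f(z')]\subset[f(x)\wedge f(z),\,f(x)\vee f(z')]$; letting $x'\to-\infty$, $z'\to+\infty$ gives a nested family of nonempty compact intervals whose intersection must be empty (since $\bigcup f((x',z'))=f(\R)=\R$), contradicting compactness.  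Without an argument of this kind, the proof as written is incomplete precisely at its crucial step.
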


\begin{proof}
First, we show the following property:
\begin{equation}\label{co-int-supp-ppres_eq}
C_\mu \subseteq C_\nu \;\iff\; C_{\phi(\mu)} \subseteq C_{\phi(\nu)} \quad (\mu,\nu\in P(\R)).
\end{equation}
If $\mu,\nu \notin \Delta$, then by Lemma \ref{mu-1-char_lem} the following equivalence is yielded:
$$
C_\mu \subseteq C_\nu \;\iff\; \big(\{\nu\}^1\big)^1\setminus\Delta \subseteq \big(\{\mu\}^1\big)^1\setminus\Delta \quad (\mu,\nu\in P(\R)\setminus\Delta).
$$
Since we have $\phi(P(\R)\setminus\Delta) = P(\R)\setminus\Delta$, we get $\phi(\mu),\phi(\nu) \notin \Delta$.
Obviously, we have $\phi\left(\big(\{\mu\}^1\big)^1\setminus\Delta\right) = \phi\left(\big(\{\mu\}^1\big)^1\right)\setminus\Delta = \big(\{\phi(\mu)\}^1\big)^1\setminus\Delta$, whence we conclude \eqref{co-int-supp-ppres_eq} in this case.
Next, we suppose that $\nu = \delta_x$ holds with some $x\in\R$.
Clearly, $C_\mu \subseteq C_{\delta_x}$ is equivalent to $\mu = \delta_x$, which holds if and only if $\phi(\mu) = \phi(\delta_x) = \delta_{f(x)}$.
This latter equation is valid exactly when we have $C_{\phi(\mu)} \subseteq C_{\phi(\delta_x)}$, which gives us \eqref{co-int-supp-ppres_eq} in this case.
Finally, let us assume that we have $\nu\notin\Delta$, $\mu = \delta_x$ and $C_{\delta_x} \subseteq C_\nu$ with some $x\in\R$. 
We consider a point $y\in C_\nu$, $x\neq y$.
Since we have $C_{\tfrac{1}{2}\delta_x+\tfrac{1}{2}\delta_y} \subseteq C_\nu$, by the previous cases, we obtain $C_{\phi\left(\tfrac{1}{2}\delta_x+\tfrac{1}{2}\delta_y\right)} \subseteq C_{\phi(\nu)}$.
Moreover, by Lemma \ref{Dirac-move_lem}, we have $\phi\left(\tfrac{1}{2}\delta_x+\tfrac{1}{2}\delta_y\right) = \tfrac{1}{2}\phi(\delta_x)+\tfrac{1}{2}\phi(\delta_y) = \tfrac{1}{2}\delta_{f(x)}+\tfrac{1}{2}\delta_{f(y)}$, thus we obtain $C_{\phi(\delta_x)} = C_{\delta_{f(x)}} \subseteq C_{\phi\left(\tfrac{1}{2}\delta_x+\tfrac{1}{2}\delta_y\right)} \subseteq C_{\phi(\nu)}$.
This verifies \eqref{co-int-supp-ppres_eq} in this case in one direction.
The other direction follows from the fact that $\phi^{-1}$ is also a surjective Kuiper isometry.
Therefore we conclude \eqref{co-int-supp-ppres_eq} in general.

Next, by Lemma \ref{Dirac-move_lem}, we have $\phi(P_c(\R)) = P_c(\R)$.
Theorem \ref{main-c_thm} gives us a homeomorphism $g\colon \R\to\R$ and an $x\in\R\cup\{\infty\}$ such that we have
$$
\phi(\mu) = \mu\circ (g \circ r_x) \qquad (\mu \in P_c(\R)),
$$
and consequently we obtain
$$
S_{\phi(\mu)} = \left\{
\begin{matrix}
g^{-1}(S_\mu) & \text{if } x=\infty\\
r_0 \circ g^{-1}(S_\mu) + x  & \text{if } x\in\R \text{ and } S_\mu \text{ is compact} \\
\{x\}\cup(r_0 \circ g^{-1}(S_\mu) + x)  & \text{if } x\in\R \text{ and } S_\mu \text{ is not compact} 
\end{matrix}
\right. \quad (\mu \in P_c(\R)).
$$
Let us suppose for a moment that we have $x\in\R$, and consider a sequence of continuous measures $\{\mu_n\}_{n=1}^\infty$ with 
$$
C_{\mu_n} = 
\left\{ 
\begin{matrix} 
\left(g\left(-\tfrac{1}{n}\right), g\left(\tfrac{1}{n}\right)\right) & \text{if } g \text{ is monotone increasing}\\
\left(g\left(\tfrac{1}{n}\right), g\left(-\tfrac{1}{n}\right)\right) & \text{if } g \text{ is monotone decreasing}
\end{matrix}
\right..
$$
Since $\{g(0)\} = C_{\delta_{g(0)}} \subseteq C_{\mu_n}$, we infer $\{f(g(0))\} = C_{\phi(\delta_{g(0)})} \subseteq C_{\phi(\mu_n)} = \R\setminus [-n+x,n+x]$ for every $n\in\N$, which gives the contradiction $\{f(g(0))\} \subseteq \cap_{n=1}^\infty \left(\R\setminus [-n+x,n+x]\right) = \emptyset$.
Therefore we conclude that 
$$
\phi(\mu) = \mu\circ g \; \text{and} \; S_{\phi(\mu)} = g^{-1}(S_\mu) \quad (\mu \in P_c(\R)),
$$
holds with some homeomorphism $g$.

Now, we consider a number $t\in\R$ and a sequence $\{\nu_n\}_{n=1}^\infty \subset P_c(\R)$ such that
$$
C_{\nu_n} = 
\left\{ 
\begin{matrix} 
\left(g\left(g^{-1}(t)-\tfrac{1}{n}\right), g\left(g^{-1}(t)+\tfrac{1}{n}\right)\right) & \text{if } g \text{ is monotone increasing}\\
\left(g\left(g^{-1}(t)+\tfrac{1}{n}\right), g\left(g^{-1}(t)-\tfrac{1}{n}\right)\right) & \text{if } g \text{ is monotone decreasing}
\end{matrix}
\right..
$$
Since we have $\{t\} = C_{\delta_{t}} \subseteq C_{\nu_n}$ $(n\in\N)$, we obtain $\{f(t)\} = C_{\phi(\delta_{t})} \subseteq \cap_{n=1}^\infty C_{\phi(\nu_n)} = \{g^{-1}(t)\}$, which implies $g = f^{-1}$.
As a consequence we have that $f$ is a homeomorphism, which makes the proof complete.
\end{proof}

Before we prove Theorem \ref{main-gen_thm}, we need one further statement.

\begin{lem}\label{d-dense_lem}
The set of all purely atomic probability measures are dense in $P(\R)$ with respect to the Kuiper distance.
\end{lem}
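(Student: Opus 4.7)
The plan is to reduce the problem to the Kolmogorov--Smirnov distance and then carry out a quantile approximation. The key observation is the elementary estimate
$$
d_{Ku}(\mu,\nu) \leq 2\cdot d_{KS}(\mu,\nu) \quad (\mu,\nu\in P(\R)),
$$
which follows directly from the definition of $d_{Ku}$ as a sum of two suprema, each of which is itself bounded above by $d_{KS}$. Consequently, it suffices to approximate an arbitrary $\mu \in P(\R)$ in the Kolmogorov--Smirnov distance by a purely atomic measure.

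Fix $\mu\in P(\R)$ and $\varepsilon>0$, and choose $N\in\N$ with $2/N<\varepsilon$. For $k=1,\dots,N-1$ I set
$$
q_k := \inf\{t\in\R : f_\mu(t)\geq k/N\},
$$
and, exploiting $f_\mu(t)\to 1$ as $t\to\infty$, I pick $q_N\geq q_{N-1}$ satisfying $f_\mu(q_N)\geq 1-1/N$. The candidate approximant will be the purely atomic measure
$$
\nu := \frac{1}{N}\sum_{k=1}^N \delta_{q_k}.
$$

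The main estimate is $\|f_\mu-f_\nu\|_\infty \leq 1/N$, after which $d_{Ku}(\mu,\nu)\leq 2/N < \varepsilon$ follows. To prove it, fix $t\in\R$ and let $k\in\{0,1,\dots,N\}$ be the largest index with $q_k\leq t$ (using the convention $q_0=-\infty$). Then $f_\nu(t)=k/N$. On the one hand, if $k\geq 1$ then $f_\mu(t)\geq f_\mu(q_k)\geq k/N$ by monotonicity, and if $k=0$ this lower bound is vacuous. On the other hand, if $k<N$ then $t<q_{k+1}$ forces $f_\mu(t)<(k+1)/N$ by the infimum definition of $q_{k+1}$, while if $k=N$ then $f_\mu(t)\geq f_\mu(q_N)\geq 1-1/N$ yields $1-f_\mu(t)\leq 1/N$. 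In all cases $|f_\mu(t)-f_\nu(t)|\leq 1/N$, as required.

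The only subtlety I anticipate is the right-tail behaviour: the naive formula $q_N=\inf\{t: f_\mu(t)\geq 1\}$ may equal $+\infty$ (for instance when $\mu$ has no largest support point), which is precisely why $q_N$ must be chosen separately so as to trap $\geq 1-1/N$ of the mass. A harmless secondary phenomenon is that atoms of $\mu$ of mass exceeding $1/N$ cause several consecutive $q_k$'s to coincide, so that $\nu$ stacks multiple copies of $\delta_{q_k}$ at such a point; this is automatically consistent with the estimate above and requires no special treatment.
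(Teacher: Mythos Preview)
Your proof is correct and follows essentially the same approach as the paper: both reduce the Kuiper density question to the Kolmogorov--Smirnov distance via the estimate $d_{Ku}\leq 2\,d_{KS}$ (the paper phrases this as ``equivalence'' of the two metrics) and then appeal to the density of purely atomic measures in the sup-norm on distribution functions. The paper leaves that last step as ``quite straightforward'', whereas you supply an explicit quantile construction; the only wrinkle is that your phrase ``by the infimum definition of $q_{k+1}$'' does not literally apply when $k=N-1$, but there the needed bound $f_\mu(t)\leq (k+1)/N=1$ is trivial, so nothing is lost.
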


\begin{proof}
Here it is easier to consider distribution functions instead of measures.
Let us observe that on the space of all distribution functions the Kuiper distance is equivalent to the Kolmogorov--Smirnov distance, which is obtained from the supremum norm.
Since the statement is quite straightforward to show in the Kolmogorov--Smirnov distance, we also have it for the Kuiper distance.
\end{proof}

Finally, we are in the position to present the verification of our last result.

\begin{proof}[Proof of Theorem \ref{main-gen_thm}]
Assume that $\phi\colon P(\R) \to P(\R)$ is a surjective Kuiper isometry.
Let us consider the function $f \colon \R \to \R$ which was defined in Lemma \ref{Dirac-move_lem} and which is a homeomorphism by Lemma \ref{f-hom_lem}.
Therefore 
$$
\phi_1\colon P(\R) \to P(\R), \; \phi_1(\mu) = \phi(\mu)\circ f \; (\mu\in P(\R))
$$
is also a surjective isometry with respect to the Kuiper distance, and we have
$$
(\phi_1(\mu))(\{x\}) = \mu(\{x\}) \quad (x\in\R, \, \mu\in P(\R)).
$$
Consequently, we obtain 
$$
\phi_1\left( \sum_{j=1}^n \alpha_j \delta_{x_j} \right) = \sum_{j=1}^n \alpha_j \delta_{x_j} \quad \left(j\in\N\cup\{\infty\},\, x_j\in\R,\, \alpha_j > 0, \; \sum_{j=1}^n \alpha_j = 1\right).
$$
Since isometries are automatically continuous, Lemma \ref{d-dense_lem} implies that $\phi_1$ has to be the identity map.
Therefore, we conclude $\phi(\mu) = \mu\circ f^{-1}$ $(\mu\in P(\R))$, which completes the proof.
\end{proof}


\section*{Acknowledgments}
The author was also supported by the "Lend\" ulet" Program (LP2012-46/2012) of the Hungarian Academy of Sciences and by the Hungarian National Research, Development and Innovation Office -- NKFIH (grant no.~K115383).


\bibliographystyle{amsplain}

\end{document}